\newtheorem{theorem}{Theorem}[section]
\newtheorem{lemma}[theorem]{Lemma}
\newtheorem{proposition}[theorem]{Proposition}
\begin{document}
\title{Torus quotients of homogeneous spaces- minimal dimensional Schubert Varieties admitting semi-stable points.}
\author{S.S.Kannan, S.K.Pattanayak  \\  
\\ Chennai Mathematical Institute, Plot H1, SIPCOT IT Park,\\ Padur 
Post Office, Siruseri, Tamilnadu - 603103, India.\\
kannan@cmi.ac.in, santosh@cmi.ac.in} 

\maketitle
\date{30.07.2008}

\begin{abstract} In this paper, for any simple, simply connected algebraic group $G$ of type $B_n,C_n$ or $D_n$ and for any maximal parabolic subgroup $P$ of $G$, we describe all minimal dimensional Schubert varieties
in $G/P$ admitting semistable points for the action of a maximal torus $T$ with respect to an ample 
line bundle on $G/P$. In this paper, we also describe, for any semi-simple simply connected algebraic group $G$ and for any Borel subgroup $B$ of $G$, all Coxeter elements $\tau$ for which the Schubert variety $X(\tau)$ admits
a semistable point for the action of the torus $T$ with respect to a non-trivial line bundle on
 $G/B$.

\end{abstract}
\hspace*{1cm}Keywords: Semistable points, line bundle, Coxeter element. 

\section {Introduction}

Let $G$ be a simply connected semi-simple algebraic group over an algebraic closed field $k$. 
Let $T$ be a maximal torus of $G$ and let $B$ be a Borel subgroup of $G$ containing $T$. In [4] and [5],  
the parabolic subgroups $Q$ of $G$ containing $B$ for which there exists an ample line bundle 
$\mathcal {L}$ on $G/Q$ such that the semistable points $(G/Q)_T^{ss}(\mathcal L)$ are the same as 
the stable points $(G/Q)_T^{s}(\mathcal L)$.

In [7], when $Q$ is a maximal parabolic subgroup of $G$ and $\mathcal L = \mathcal L_{\varpi}$, 
where $\varpi$ is a minuscule dominant weight, it is shown that there exists unique minimal 
dimensional Schubert variety $X(w)$ admitting semistable points with respect to $\mathcal L$.

Now, let $G$ be a simple algebraic group of type $B, C$ or $D$ and let $P$ be a maximal parabolic subgroup of $G$. Let $\mathcal L$ be an ample line bundle on $G/P$. In this paper, we describe all minimal dimensional Schubert varieties in $G/P$ admitting semistable points with respect to $\mathcal L$. 
For a precise statement, see theorem 3.2.

Now, let $G$ be a semi-simple simply connected algebraic group over an algebraic closed field $k$. Let $T$ be a maximal torus of $G$ and let $B$ be a Borel subgroup of $G$ containing $T$. A Schubert variety $X(w)$ in $G/B$ contains a (rank $G$)-dimensional $T$-orbit if and only if $w \geq \tau$ for some Coxeter element $\tau$.

So, it is a natural question to ask if for every Coxeter element $\tau$, there is a non-trivial line bundle $\mathcal L$ on $G/B$ such that $X(w)_T^{ss}(\mathcal L) \neq \emptyset$.

In this paper, we describe all Coxeter elements $\tau$ for which there exists a non-trivial 
line bundle $\mathcal L$ on $G/B$ such that $X(w)_T^{ss}(\mathcal L) \neq \emptyset$. 

The layout of the paper is as follows: 

Section 2 consists of preliminary notation and a combinatorial lemma.

Section 3 consists of minimal dimensional Schubert varieties in $G/P$, ( where $G$ is a 
semi-simple  algebraic group of type $B_n, C_n$ or $D_n$ and $P$ is a maximal parabolic 
subgroup of $G$), admitting semistable points with respect to an ample line bundle on $G/P$. 

Section 4 consists of description of Coxeter elements for which the corresponding Schubert 
varieties admit semistable points with respect to a non-trivial line bundle on $G/B$.

\section{Preliminary notation and a combinatorial lemma}
This section consists of preliminary notation and a lemma describing a criterion for a Schubert variety to admit semistable points.
Let $G$ be a semi-simple algebraic group over an algebraically closed field $k$. 
Let $T$ be a 
maximal torus of $G$, $B$ a Borel subgroup of 
$G$ containing $T$ and let $U$ be the unipotent radical of $B$. 
Let $N_G(T)$ be the normaliser of $T$ in $G$. Let $W=N_G(T)/T$ be Weyl group of 
$G$ with respect to $T$ and $R$ denote the set of roots with respect to 
$T$, $R^{+}$ positive roots with respect to $B$. Let $U_{\alpha}$ denote 
the one dimentional $T$-stable subgroup of $G$ corresponding to the root 
$\alpha$ and let $S=\{\alpha_1, \cdots, \alpha_l\}\subseteq R^{+}$ denote the set of simple roots.  
For a subset $I\subseteq S$ denote 
$W^I =\{w\in W| w(\alpha)>0, \, \alpha\in I\}$ and $W_I$ is the subgroup of $W$
generated by the simple reflections $s_\alpha, \alpha \in I$. Then every $w \in W$ can be uniquely 
expressed as $w=w^I.w_I$, with $w^I \in W^I$ and $w_I \in W_I$. Denote $R(w)=
\{\alpha \in R^+: w(\alpha)<0\}$ and $w_0$ is the longest element of $W$ with respect to $S$.
Let $X(T)$ (resp. $Y(T)$) denote the set of characters of $T$ (resp. 
one parameter subgroups of $T$). Let $E_1:= X(T)\otimes \mathbb R$, 
$E_2=Y(T)\otimes \mathbb R$. Let $\langle . ,  .\rangle : E_1\times E_2 
\longrightarrow \mathbb R$ be the canonical non-degenerate bilinear form. 
Choose $\lambda_j$'s in $E_2$ such that 
$\langle \alpha_i, \lambda_j \rangle = \delta_{ij}$ for all $i$. Let 
$\overline{C} := \{\lambda \in E_2 | \langle \lambda , \alpha \rangle \geq 0 \,\,\forall\,\alpha \in R^+\}$ and for all $\alpha \in R$, there is a homomorphism $SL_2 \stackrel{\phi_\alpha}{\longrightarrow} G$, see page-19 of [1]. We have $\check{\alpha} : G_m \longrightarrow G$ defined by $\check{\alpha}(t) = \phi_\alpha ( \left( \begin{array}{cc}
t & 0 \\
0 & t^{-1} \\
\end{array} \right)).$
We also have 
$s_{\alpha}(\chi)=\chi-\langle \chi, \check{\alpha}\rangle \alpha$ for all 
$\alpha \in R$ and $\chi \in E_1$. Set $s_i=s_{\alpha_i} \,\ \forall 
\,\ i=1,2, \cdots, l$. Let $\{\omega_i: i=1,2,\cdots, l\} \subset E_1$ be the 
fundamental weights; i.e. $\langle \omega_i, \check{\alpha_j} \rangle = 
\delta_{ij}$ for all $i,j = 1,2, \cdots, l$. 

%Let $G$ be a semisimple algebraic group. Let $T$ be a maximal torus of G  and $B$ be 
%a Borel subgroup containing $T$. Let $W = N_G(T)/T$ be the Weyl group.
%Let $R$ be the root system of $G$ with respect to $T$ and $S=\{\alpha_1,
%\alpha_2,\ldots ,\alpha_n\} \subset R$ be the set of simple roots with respect to $B$.

For any character $\chi$ of $B$, we denote by $\mathcal L_{\chi}$, the line bundle on $G/B$ 
given by the character $\chi$. Let  $X(w) = \overline{BwB/B}$ denote the Schubert variety 
 corresponding to $w$. We denote by $X(w)_T^{ss}(\mathcal L_{\chi})$ the semistable points of 
$X(w)$ for the action of $T$ with respect to the line bundle $\mathcal L_{\chi}$.

\begin{lemma}

Let $\chi = \sum_{\alpha \in S}a_{\alpha}\varpi_{\alpha}$ be a dominant character of $T$ which is in the root lattice. Let $I = Supp (\chi) = \{\alpha \in S : a_{\alpha}\neq 0\}$ and let $w\in W^{I^c}$. Then 
$X(w)_{T}^{ss}(\mathcal {L}_{\chi}) \neq \emptyset $ if and only if $w\chi \leq 0$.
\end{lemma}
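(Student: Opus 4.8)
The plan is to translate ``$X(w)$ admits a semistable point for $\mathcal{L}_\chi$'' first into the occurrence of the trivial $T$-weight in the section modules $H^0(X(w),\mathcal{L}_{n\chi})$, and then into a convex-geometric statement about the extremal weights $v\chi$, $v\le w$. I would begin with the standard GIT reduction. Since $T$ is linearly reductive, a point $x\in X(w)$ is semistable for $\mathcal{L}_\chi$ exactly when some $T$-invariant section of $\mathcal{L}_{n\chi}$ over $X(w)$ is non-zero at $x$, for some $n\ge 1$; as $X(w)$ is irreducible, a non-zero section is non-zero at some point, so $X(w)^{ss}_T(\mathcal{L}_\chi)\neq\emptyset$ if and only if $H^0(X(w),\mathcal{L}_{n\chi})^{T}\neq 0$ for some $n\ge 1$, equivalently the weight $0$ occurs in $H^0(X(w),\mathcal{L}_{n\chi})$ for some $n\ge 1$. (As $\chi$ lies in the root lattice so does $n\chi$, so there is no obstruction to $0$ occurring coming from the class of $n\chi$ modulo the root lattice.)

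For the direction ``$\Rightarrow$'' I would use that, for dominant $\mu$, the module $H^0(X(w),\mathcal{L}_\mu)$ is the Demazure module attached to $w$: the restriction $H^0(G/B,\mathcal{L}_\mu)\to H^0(X(w),\mathcal{L}_\mu)$ is a surjection of $B$-modules, so all weights of $H^0(X(w),\mathcal{L}_\mu)$ are $\le\mu$; and $H^0(X(w),\mathcal{L}_\mu)$ is generated as a $B$-module by a weight vector of weight $w\mu$, so all its weights are $\ge w\mu$. Taking $\mu=n\chi$: if $0$ is a weight of $H^0(X(w),\mathcal{L}_{n\chi})$ then $n\,w\chi=w(n\chi)\le 0$, hence $w\chi\le 0$.

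For ``$\Leftarrow$'' I would reduce to the combinatorial implication
\[
 (\star)\qquad w\chi\le 0\ \Longrightarrow\ 0\in\mathrm{conv}\{\,v\chi : v\in W,\ v\le w\,\}.
\]
Granting $(\star)$, write $0=\sum_{v\le w}c_v\,v\chi$ with $c_v\in\mathbb{Q}_{\ge 0}$ and $\sum_v c_v=1$ (rational since the $v\chi$ are lattice points); clearing denominators gives integers $m_v\ge 0$, not all zero, with $\sum_{v\le w}m_v\,v\chi=0$ and $N:=\sum_v m_v\ge 1$. For each $v\le w$ pick a non-zero vector $p_v\in H^0(X(w),\mathcal{L}_\chi)$ of $T$-weight $v\chi$ (the weight space of weight $v\chi$ in this Demazure module is non-zero precisely when $v\le w$). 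The section ring $\bigoplus_{n\ge 0}H^0(X(w),\mathcal{L}_{n\chi})$ is an integral domain, $X(w)$ being an irreducible variety, so $\prod_{v\le w}p_v^{m_v}$ is a non-zero element of $H^0(X(w),\mathcal{L}_{N\chi})$ of $T$-weight $\sum_v m_v\,v\chi=0$, i.e. a non-zero $T$-invariant section; by the first step, $X(w)^{ss}_T(\mathcal{L}_\chi)\neq\emptyset$.

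The step I expect to be the real obstacle is $(\star)$ itself --- presumably the ``combinatorial lemma'' advertised for Section~2. Note that $0$ always lies in the interval $\{\mu:w\chi\le\mu\le\chi\}$ (because $\chi$ is dominant and in the root lattice, hence $\chi\ge 0$) and all the $v\chi$ with $v\le w$ lie in this interval as well, but the interval is in general strictly larger than $\mathrm{conv}\{v\chi:v\le w\}$, so $(\star)$ is not formal. I would prove it by induction on $\ell(w)$. If $v\chi\le 0$ for some $v< w$, one may replace $v$ by the minimal representative of its coset modulo $W_{I^c}$, which (as $w\in W^{I^c}$) still satisfies that representative $\le w$ and has strictly smaller length, so the inductive hypothesis together with $\{u\chi:u\le v\}\subseteq\{u\chi:u\le w\}$ settles this case. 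Hence one may assume $w$ is Bruhat-minimal among the elements of $W^{I^c}$ with $w\chi\le 0$, and in that case I would examine a cover $w'\lessdot w$: from $w\chi\le 0$, $w'\chi\not\le 0$ and the explicit relation between $w\chi$ and $w'\chi$ along the relevant reflection string, one is forced to have $0$ in the convex hull of $w\chi$, $w'\chi$ and finitely many further $v\chi$, $v\le w$ --- equivalently, one rules out a $\lambda\in E_2$ with $\langle v\chi,\lambda\rangle>0$ for all $v\le w$ by moving $\lambda$ into the dominant chamber $\overline{C}$ by the $W$-action and using $w\in W^{I^c}$ together with the minimality of $w$. Carrying out this bookkeeping is the one substantive computation in the proof.
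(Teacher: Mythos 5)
Your architecture is genuinely different from the paper's: the paper runs the Hilbert--Mumford criterion directly at a point of $X(w)$ chosen outside all $W$-translates of the Schubert varieties $X(\tau)$, $\tau\in W^{I^c}$, $\tau\not\geq w$ (their Step 1), and evaluates $\mu^{L}(x,\lambda)$ cell by cell; you instead translate semistability into the existence of a non-zero $T$-invariant section, handle the forward direction via the weight bounds $w\mu\leq(\text{weights of }H^0(X(w),\mathcal{L}_\mu))\leq\mu$ for the Demazure module, and reduce the converse to the statement $(\star)$ that $w\chi\leq 0$ forces $0\in\mathrm{conv}\{v\chi: v\leq w\}$, after which the product of extremal weight sections in the (integral) section ring produces the invariant. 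The forward direction and the derivation of an invariant from $(\star)$ are correct and arguably cleaner than the paper's citations.

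The problem is that $(\star)$ is not proved; it is the entire content of the converse, and your sketch does not close it. The reduction to the case where $w$ is Bruhat-minimal in $W^{I^c}$ with $w\chi\leq 0$ is fine, but in that case you assert, without any mechanism, that examining a cover $w'\lessdot w$ ``forces'' $0$ into the convex hull of finitely many $v\chi$, and you explicitly defer the ``bookkeeping.'' That bookkeeping is the theorem. Concretely, $(\star)$ is equivalent to: for every $\phi\in W$ (obtained by moving a linear functional $\lambda$ into $\overline{C}$) there exists $v\leq w$ with $(\phi v)^{I^c}\geq w$, so that $\phi v\chi\leq w\chi\leq 0$ and hence $\langle v\chi,\lambda\rangle\leq 0$. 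This is exactly the non-trivial Bruhat-order statement the paper establishes in its Step 1 --- that $X(w)\not\subseteq\bigcup_{\phi\in W}\phi X(\tau)$ for $\tau\in W^{I^c}$ with $\tau\not\geq w$ --- and its proof there rests on a Deodhar-type lemma (Lemma 5.6 of [6]) giving an element $\tau^{-}(w^{-1},\phi^{-1})$ with $w^{-1}\leq\tau^{-}(w^{-1},\phi^{-1})\phi\leq\tau^{-1}$. Nothing in your cover-relation analysis substitutes for this input: in the minimal case you know $w\chi\leq 0$ while $v\chi\not\leq 0$ for all $v<w$ in $W^{I^c}$, and that alone does not place $0$ in the convex hull. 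Until $(\star)$ is supplied (either by the paper's route or by an honest proof of the claim about $(\phi v)^{I^c}$), the converse direction is open.
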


\begin{proof}
If $X(w)_{T}^{ss}(\mathcal {L}_{\chi}) \neq \emptyset $, then, by Hilbert-Mumford criterion 
(Theorem (2.1) of [8]) and lemma (2.1) of [10], we see that $w\chi \leq 0$.

Conversely, let $w\chi \leq 0$.

\underline {Step 1} - We prove that if $w, \tau \in W^{I^c}$ are such that $X(w)\subseteq \bigcup_{\phi \in W}\phi X(\tau)$, then, $w \leq \tau$. Now, suppose that $X(w)\subseteq \bigcup_{\phi \in W}\phi X(\tau)$. Then, since $X(w)$ is irreducible 
and $W$ is finite, we must have

\hspace{3cm} $ X(w) \subseteq \phi X(\tau),\,\, \mbox{for some} \,\ \phi \in W.$
 
 Hence, $\phi^{-1}X(w) \subseteq X(\tau)$. Now, let $P_I= BW_IB$ and consider 
the projection 
\begin{center}
$\pi: G/B \longrightarrow G/P_I$
\end{center}

Then, $\pi^{-1}(\phi^{-1}X(w)) \subseteq \pi^{-1}(X(\tau))$. Let $w^{max}$(resp. 
$\tau^{max}$) be the maximal representative of $w$ (resp. $\tau$) in $W$.
 
Hence, $\phi^{-1}X(w^{max}) \subseteq X(\tau^{max})$. So, we may assume that $I = S$.

Now, since $\phi^{-1}X(w) \subseteq X(\tau)$, we have  $\phi^{-1}w_1 \leq 
\tau, \,\, \forall \,\, w_1 \leq w$.

Therefore $w_1\phi \leq \tau^{-1}\,\, \forall \,\, w_1 \leq w^{-1}$. Hence, by lemma (5.6) of [6], we have $\tau^{-}(w^{-1},\phi^{-1})\phi \leq \tau^{-1}$.

Hence, $w^{-1}\leq \tau^{-}(w^{-1},\phi^{-1})\phi \leq \tau^{-1}$. So $w \leq 
\tau$.

Now, let $w \in W^{I^c}$ be such that $w\chi \leq 0$. Then by step 1, there exist a point $x\in X(w) \setminus W$-translates of $X(\tau), 
\tau \in W^{I^c},\,\, \tau \ngeq w$. $\hspace{4cm} \longrightarrow (1)$.

\underline{Step 2}: We prove that $x$ is semistable.

Let $\lambda$ be an one parameter subgroup of $T$. Choose $\phi \in W$ such that $\phi \lambda 
\in \bar C$. Let $\tau \in  W^{I^c}$ be such that $\phi x \in U_{\tau}\tau P_I$.

 By (1), \,\ $w \leq \tau$. Hence, $\tau \chi \leq w\chi \leq 0$.

 Hence, by lemma (2.1) of [10], we have  $\mu^L(x,\lambda)= \mu^L(\phi x, \phi \lambda)= 
\langle -\tau \chi, \lambda \rangle \geq 0$. 

Hence, by Hilbert-Mumford criterion (Theorem (2.1) of [8]), $x$ is semistable. 

\end{proof}

\section{Minimal dimensional Schubert variety in $G/P$ admitting semistable points}

In this section, we describe all minimal dimensional Schubert varieties $X(w)$
in $G/P$ (where $G$ is a simple algebraic group of type $B$, $C$ or $D$, and $P$ is a maximal parabolic subgroup of $G$) for which $X(w)$ admits a semistable point for the action of a maximal torus of $G$ with respect to an ample line bundle on $G/P$.

Let $I_r = S \setminus \{\alpha_r\}$ and let $P_{I_r}= BW_{I_r}B$ be the maximal parabolic corresponding to the simple root $\alpha_r$.
In this section we will describe all minimal elememts  of $ W^{I_r}$ for which 
$X(w)_{T}^{ss}(\mathcal L_r) \neq \emptyset$.

At this point, we recall a standard property of the fundamental weights of type $A_n, B_n, C_n$ and $D_n$.

In types $A_n, B_n, C_n$ and $D_n$, we have $|\langle \varpi_r, 
\check{\alpha}\rangle| \leq 2$ for any fundamental weight $\varpi_r$ and any root $\alpha$.

\begin{proof}
Now $\langle \varpi_r,\check{\alpha}\rangle \leq \langle \varpi_r,\check{\eta}\rangle$, where $\eta$ is a highest root for the corresponding root system.

The highest root for type $A_n$ is $\alpha_1+\alpha_2+\ldots +\alpha_n$, the highest roots for type $B_n$ are $\alpha_1+2(\alpha_2+ \ldots +\alpha_n)$ and $\alpha_1+\alpha_2+ \ldots +\alpha_n$, the highest roots for type $C_n$ are $2(\alpha_1+\alpha_2+ \ldots +\alpha_{n-1})+\alpha_n$ and $\alpha_1+2(\alpha_2+ \ldots +\alpha_{n-1})+\alpha_n$ and the unique highest root for type $D_n$ is  $\alpha_1+2(\alpha_2+ \ldots +\alpha_{n-2})+\alpha_{n-1}+\alpha_n$.

In all these cases, we have $\langle \varpi_r,\check{\eta}\rangle \leq 2$. So  $|\langle \varpi_r,\check{\alpha}\rangle| \leq 2$, for any root $\alpha$.
\end{proof}    

Let $G$ be a simple simply-connected algebraic group of type $B_n, C_n$ or $D_n$. Let $T$ be a maximal torus of $G$ and let $S$ be the set of simple roots with respect to a Borel subgroup $B$ of $G$ containing $T$.

\begin{proposition}
Let $I_r = S \setminus \{\alpha_r\}$ and let $w \in W^{I_r}$ be of maximal 
length such that $w(\varpi_r) \geq 0$. Write $w(\varpi_r) = \sum_{i=1}^{n}a_i\alpha_i$ and let $a = max \{a_i: i = 1,2, \cdots ,n\}$. Then $a \in \{1,\frac {3}{2}\}$. Further, if $a= \frac {3}{2}$, then $r$ must be odd and $\varpi_r$ must be in type $D_n$ and $a=a_{n-1}$ or $a= a_n$.    

\end{proposition}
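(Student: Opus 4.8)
The plan is to compute $w(\varpi_r)$ explicitly for each type $B_n, C_n, D_n$ and each maximal parabolic index $r$, by identifying the maximal-length $w \in W^{I_r}$ with $w(\varpi_r) \geq 0$, and then read off the coefficients $a_i$. The key tool is the combinatorial lemma proved just above: since $|\langle \varpi_r, \check\alpha\rangle| \leq 2$ for every root $\alpha$, the weights occurring in any Weyl-orbit computation stay controlled, and the denominators that can appear in expressing a weight in the basis of simple roots are bounded. I would first recall that for $w \in W^{I_r}$, the weight $w(\varpi_r)$ is obtained from $\varpi_r$ by a sequence of simple reflections $s_i$ with $i \neq r$ never reintroduced once they turn a coefficient negative; equivalently $w(\varpi_r)$ is the unique minimal element in the set $\{v(\varpi_r): v \in W^{I_r}, v(\varpi_r)\geq 0\}$-orbit data, characterized by $w(\varpi_r) \geq 0$ but $s_i w(\varpi_r) \not\geq 0$ is impossible for $i \neq r$ while $w$ is of maximal length.

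**Next I would** handle the inductive/recursive structure. Expressing $\varpi_r = \sum_i c_i \alpha_i$ in terms of simple roots using the inverse Cartan matrix, the entries are either integers or (in types $B_n, C_n, D_n$) half-integers, and the half-integers occur in a predictable pattern: for $D_n$ the last two coordinates $c_{n-1}, c_n$ are the ones that can be genuinely half-integral (this is the "spin" phenomenon), whereas for $B_n$ and $C_n$ the half-integrality is confined to a single coordinate and gets cleared appropriately. Acting by $w$ permutes/negates the "coordinate" description of weights in the standard $\pm\epsilon_i$ model; I would translate everything into $\epsilon$-coordinates, where $\varpi_r$ has a clean form (e.g. $\epsilon_1 + \cdots + \epsilon_r$ up to a factor, or $\frac12(\epsilon_1 + \cdots + \epsilon_n)$ for spin weights), compute $w$ as the specific signed permutation realizing maximal length subject to $w(\varpi_r) \geq 0$, and convert back. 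The bound $a \leq 3/2$ then falls out because a coefficient $a_i$ of $\alpha_i$ in $w(\varpi_r)$ is a sum of the form $\sum_{j\leq i}(\text{coordinate contributions})$, each bounded using $|\langle \varpi_r, \check\alpha\rangle|\leq 2$; the extreme value $3/2$ requires simultaneously a half-integer contribution (forcing a spin-type weight, hence type $D_n$) and an extra full unit, which pins down $i \in \{n-1, n\}$ and forces $r$ odd by a parity count on the number of $\epsilon$'s summed.

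**The main obstacle** I expect is the case analysis itself: there are three infinite families, and within each, the behavior of the maximal-length $w \in W^{I_r}$ with $w(\varpi_r)\geq 0$ depends on the residue/parity of $r$ and on whether $\alpha_r$ is a "short-root end" or the fork node. Getting the maximal-length $w$ right — rather than just some $w$ with $w(\varpi_r)\geq 0$ — is delicate, since maximality in $W^{I_r}$ interacts with the condition $w(\varpi_r)\geq 0$ in a non-obvious way; I would argue that the correct $w$ sends $\varpi_r$ to the lowest weight of the $P_{I_r}$-stable "cone," i.e. to $w_0^{I_r}(\varpi_r)$ suitably interpreted, and verify $w(\varpi_r)\geq 0$ holds for that element and fails for anything longer. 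Once $w(\varpi_r)$ is known explicitly in each case, extracting $a$ and checking the claim ($a \in \{1, 3/2\}$, and $a = 3/2$ only in $D_n$, $r$ odd, with the max attained at position $n-1$ or $n$) is a finite, if tedious, verification — I would organize it as a table indexed by (type, parity of $r$) and point to the $D_n$ spin-coordinate computation as the one producing $3/2$.
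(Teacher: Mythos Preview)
Your approach is fundamentally different from the paper's, and the difference matters. The paper does \emph{not} compute $w$ or $w(\varpi_r)$ explicitly at all. Instead it argues by contradiction directly from the maximality hypothesis: suppose $a \geq 2$, let $i_0$ be the least index with $a_{i_0}=a$, and show (with a short case analysis at the special nodes $i_0 \in \{n-2,n-1,n\}$ depending on the type) that $\langle w(\varpi_r),\check\alpha_{i_0}\rangle > 0$. Since also $\langle w(\varpi_r),\check\alpha_{i_0}\rangle \leq 2 \leq a_{i_0}$, one gets $0 \leq s_{i_0}w(\varpi_r) < w(\varpi_r)$, hence $s_{i_0}w > w$ in $W^{I_r}$, contradicting maximality. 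The refinement $a=\tfrac32 \Rightarrow D_n$, $r$ odd, $a \in \{a_{n-1},a_n\}$ is then handled by the same style of argument plus the bound $|\langle w(\varpi_r),\check\beta\rangle|\leq 2$. No $\epsilon$-coordinates, no explicit $w$, no inverse Cartan matrix.

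Your plan has a real gap beyond being more laborious: you repeatedly speak of ``the'' maximal $w$ and of $w(\varpi_r)$ as ``the unique minimal element'' or ``the lowest weight of the $P_{I_r}$-stable cone.'' There is no a priori reason the maximal-length $w\in W^{I_r}$ with $w(\varpi_r)\geq 0$ is unique, and the proposition is asserted for \emph{any} such $w$. An explicit-computation strategy would therefore have to either classify all maximal $w$'s (for every $r$ in every type) or prove separately that they all yield the same coefficient vector --- neither of which you address, and both of which are substantially harder than the paper's two-line contradiction. Your identification of $w$ with something like $w_0^{I_r}$ is also not right: that element typically makes $w(\varpi_r)$ anti-dominant, not $\geq 0$. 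The abstract maximality argument sidesteps all of this; I would recommend abandoning the explicit route and adopting it.
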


\begin{proof}

Since $2\leq r \leq n-2$, we have $2\varpi_r \in Z_{\geq 0}S$. Hence, if $a \in \{1,\frac {3}{2}\}$, then $a \geq 2$.

 Let $i_{0}$ be the least integer such that $a_{i_0}=a$. 

Clearly, $i_0 \neq 1$. We first observe that, $s_{i_0}w(\varpi_r)= w(\varpi_r)- \langle 
w(\varpi_r), \check{\alpha_{i_0}} \rangle \alpha_{i_0} \geq 0$, since, $\langle 
w(\varpi_r), \check{\alpha_{i_0}} \rangle \leq 2 \leq a=a_{i_0}$.

For all the cases except $i_0=n$ in type $B_n$, $i_0=n-1$ in type $C_n$ and  $i_0=n-2, n-1, n$ in type $D_n$, we have $\langle 
w(\varpi_r), \check{\alpha_{i_0}} \rangle = 2a-(a_{i_0-1}+a_{i_0+1}) > 0$. Hence, 
$s_{\alpha_{i_0}}w(\varpi_r) < w(\varpi_r)$. So, $s_{\alpha_{i_0}}w > w$, a contradiction to the maximality  of $w$.

Now, we treat the special cases explicitly. 

{\it Case 1} : $i_0 = n$ in type $B_n$.

 In this case,  $\langle w(\varpi_r), \check{\alpha_n} \rangle = -2a_{n-1}+2a_n > 0$, 
since $a_n =a > a_{n-1}$. So, $s_nw(\varpi_r) < w(\varpi_r)$. Hence, $s_nw > w$, a contradiction to the maximality of $w$.

{\it Case 2} : $i_0 = n-1$ in type $C_n$.

In this case   $\langle w(\varpi_r), \check{\alpha_{n-1}} \rangle =-a_{n-2}+2a_{n-1}-2a_n$.

So, we need to show that $2a_{n-1} > a_{n-2}+2a_n$. If not, then $2a_n \geq a_{n-1}+1$, since $a_{n-2} \leq a_{n-1}-1$.

Now, we have $s_nw(\varpi_r)= \sum_{i \neq n}a_i\alpha_i + (a_{n-1}-a_n)\alpha_n \geq 0$, since $a_{n-1}=a \geq a_n$.

On the other hand, since $2a_n \geq a_{n-1}+1$, we have $a_{n-1}-a_n \leq a_n-1$. So, $s_nw(\varpi_r) < w(\varpi_r)$.  Hence, $s_nw > w$, a contradiction to the maximality of $w$.

{\it Case 3} : $i_0 = n$ in type $D$.

Here, we have  $\langle w(\varpi_r), \check{\alpha_n} \rangle = 2a_n-a_{n-2} > 0$, since 
$a_n=a > a_{n-2}$.

So, 
$s_nw(\varpi_r) < w(\varpi_r)$. Hence, $s_nw > w$, a contradiction to the maximality  of $w$.

{\it Case 4} : $i_0 = n-1$ in type $D$.

This case is similar to Case-3.

{\it Case 5} : $i_0 = n-2$ in type $D$.

We have $\langle w(\varpi_r), \check{\alpha_{n-2}} \rangle = -a_{n-3}+2a_{n-2}-a_{n-1}-a_n$.

In order to prove that $\langle w(\varpi_r), \check{\alpha_{n-2}} \rangle > 0$, we need to prove $a_{n-1}+a_n \leq a_{n-2}$, since $a_{n-3} < a_{n-2}$.

Suppose  $a_{n-1}+a_n \geq a_{n-2}+1$. Then, we have either $2a_{n-1} > a_{n-2}$ or $2a_n >a_{n-2}$.

Without loss of generality, we may assume that  $2a_{n-1} > a_{n-2}$. Hence we have 

$s_{n-1}w(\varpi_r)= \sum_{i \neq {n-1}}a_i\alpha_i + (a_{n-2}-a_{n-1})\alpha_{n-1} \leq w(\varpi_r)$, since $a_{n-2}-a_{n-1} < a_{n-1}$. 

On the other hand, $s_{n-1}w(\varpi_r) \geq 0$, since $a_{n-2} = a \geq a_{n-1}$. So, $s_{n-1}w > w$, a contradiction to the maximality  of $w$. 

%Hence, $\langle w(\varpi_r), \check{\alpha_{n-2}} \rangle > 0$. Then, $s_{n-2}w(\varpi_r) < w(\varpi_r)$. Hence,  $s_{n-2}w > w$, a contradiction to the maximality  of $w$.

Thus, we conclude that $a \in \{1,\frac {3}{2}\}$.

Now, if $a= \frac {3}{2}$, then clearly $r$ is odd and $G$ is not of type $B_n$. We now prove that $G$ can not be of type $C_n$. 

Suppose on the contrary, let $t$ be the least positive integer such that $\sum_{i=t}^{n-1}\alpha_i+\frac {3}{2}\alpha_n \leq w(\varpi_r)$.

Since $\langle w(\varpi_r), \check{\alpha_n} \rangle = 3-a_{n-1} \leq 2$, we have $a_{n-1} =1$.

If $t \leq n-2$, then $0\leq s_tw(\varpi_r) = \sum_{i \neq t}a_i\alpha_i < w(\varpi)$. So, $s_tw > w$, a contradiction to the maximality of $w$. Hence, $a_{n-2}=0$.

We now claim that $a_i=0 \,\, \forall \,\, i \leq n-3$. For otherwise, let $m \leq n-3$ be the largest integer such that $a_m =1$.

Now, $\langle w(\varpi_r), \check{\alpha_{m+1}+\alpha_{m+2}+\ldots \alpha_{n-1}} \rangle = -3$, a 
contradiction to the fact that $|\langle w(\varpi_r), \check{\beta} \rangle| \leq 2$ for all root 
$\beta$.

Thus, $a_i=0 \,\, \forall \,\, i \leq n-2$. Hence, $w(\varpi)= \alpha_{n-1}+\frac {3}{2}\alpha_n$.
 But $\langle w(\varpi_r), \check{\alpha_{n-1}+\alpha_n} \rangle = 3$, a contradiction to the fact that $|\langle w(\varpi_r), \check{\beta} \rangle| \leq 2$ for all root $\beta$. 

Thus, $G$ can not be of type $C_n$.

If $G$ is of type $D_n$, then $a_i \leq \frac {3}{2} \,\, \forall \,\, i = 1,2, \cdots, n$. We now claim that, $a_{n-1}+a_n \leq 2$. Suppose on the contrary, let $a_{n-1}=a_n= \frac {3}{2}$. 

We claim that $a_m = 0 \,\, \forall \,\, m \leq n-3$. Otherwise, let $t$ be the least positive integer such that $\sum_{i=t}^{n-2}\alpha_i+\frac {3}{2}\alpha_{n-1}+\frac {3}{2}\alpha_n\leq w(\varpi_r)$. Then, $a_{t-1}=0$ and $t \leq n-3$.

Hence, $\langle w(\varpi_r), \check{\alpha_t+\alpha_{t+1}+\ldots \alpha_{n-1}+\alpha_n} \rangle = 3$, a  contradiction to the fact that $|\langle w(\varpi_r), \check{\beta} \rangle| \leq 2$ for all root $\beta$. 

Thus, $a_m = 0 \,\, \forall \,\, m \leq n-3$. Hence, $w(\varpi)= \alpha_{n-2}+\frac {3}{2}(\alpha_{n-1}+\alpha_n)$.

So, $\langle w(\varpi_r), \check{\alpha_{n-2}+\alpha_{n-1}+\alpha_n} \rangle = 3$, a  contradiction to the fact that $|\langle w(\varpi_r), \check{\beta} \rangle| \leq 2$ for all root $\beta$. 

Thus, in type $D_n$ not both $a_{n-1}$ and $a_n$ can be $\frac {3}{2}$.

\end{proof}

\underline{Notation:} $J_{p,q}= \{(i_1,i_2,\cdots ,i_p): i_k \in \{1,2,\cdots ,q\} \,\, \forall\,\, k \,\,\mbox{and} \,\, i_{k+1}-i_k \geq 2\}$

Now, we will describe the set of all elements $w \in W^{I_r}$ of minimal length such that $w\varpi_r \leq 0$ for types $B_n, C_n$ and $D_n$.

\begin{theorem}

Let $W_{min}^{I_r}=$ Minimal elements of the set of all $\tau \in W^{I_r}$ such that $X(\tau)_T^{ss}(\mathcal L_{\varpi_r}) \neq \emptyset$.
 
(1) \underline{Type $B_n$}: (i) Let $r=1$. Then $w= s_ns_{n-1}\ldots s_1$.

(ii)  Let $r$ be an even integer in $\{2,3, \cdots, n\}$. For any $\underline {i}=(i_1,i_2,\cdots ,i_{\frac {r}{2}}) \in J_{\frac {r}{2},n-1}$
, there exists unique $w_{\underline {i}} \in W_{min}^{I_r}$ such that $w_{\underline {i}}(\varpi_r) = -(\sum_{k=1}^{\frac {r}{2}}\alpha_{i_k})$. Further, $W_{min}^{I_r}= \{ w_{\underline {i}}: \underline {i} \in J_{\frac {r}{2},n-1}\}$. 

(iii) Let $r$ be an odd integer in $\{2,3, \cdots, n\}$. For any $\underline {i}=(i_1,i_2,\cdots ,i_{\frac {r-1}{2}}) \in J_{\frac {r-1}{2},n-2}$
, there exists unique $w_{\underline {i}} \in W_{min}^{I_r}$ such that $w_{\underline {i}}(\varpi_r) = -(\sum_{k=1}^{\frac {r-1}{2}}\alpha_{i_k}+\alpha_n)$. Further, $W_{min}^{I_r}= \{ w_{\underline {i}}: \underline {i} \in J_{\frac {r-1}{2},n-2}\}$.

 (iv) Let $r=n$. If $n$ is even, then, $w = w_{\frac {n}{2}} \cdots w_1$, where, $w_i = s_{2i-1}\ldots s_n, \,\,  i = 1,2, \cdots \frac {n}{2}$ and if $n$ is odd, then,  $w= w_{[\frac {n}{2}]+1} \cdots w_1$, where, $w_i =  s_{2i-1}\ldots s_n, \,\,  i = 1,2, \cdots [ \frac {n}{2}]+1.$

(2) \underline{Type $C_n$}: (i) Let $r=1$. Then $w= s_ns_{n-1}\ldots s_1$.

(ii)  Let $r$ be an even integer in $\{2,3, \cdots, n-1\}$. For any $\underline {i}=(i_1,i_2,\cdots ,i_{\frac {r}{2}}) \in J_{\frac {r}{2},n-1}$
, there exists unique $w_{\underline {i}} \in W_{min}^{I_r}$ such that $w_{\underline {i}}(\varpi_r) = -(\sum_{k=1}^{\frac {r}{2}}\alpha_{i_k})$. Further, $W_{min}^{I_r}= \{ w_{\underline {i}}: \underline {i} \in J_{\frac {r}{2},n-1}\}$.  

(iii) Let $r$ be an odd integer in $\{2,3, \cdots, n-1\}$. For any $\underline {i}= (i_1,i_2,\cdots ,i_{\frac {r-1}{2}}) 
\in J_{\frac {r-1}{2},n-2}$, there exists unique $w_{\underline {i}} \in W_{min}^{I_r}$ such that 
$w_{\underline {i}}(\varpi_r) = -(\sum_{k=1}^{\frac {r-1}{2}}\alpha_{i_k}+\frac {1}{2}\alpha_n)$. Further, $W_{min}^{I_r}= \{ w_{\underline {i}}: \underline {i} \in J_{\frac {r-1}{2},n-2}\}$.

(3) \underline{Type $D_n$}: (i) Let $r=1$. Then $w= s_ns_{n-1}\ldots s_1$.

(ii) Let $r$ be an even integer in $\{2,3, \cdots, n-2\}$. For any $\underline {i}=(i_1,i_2,\cdots ,i_{\frac {r}{2}}) \in J_{\frac {r}{2},n}\setminus Z$
, there exists unique $w_{\underline {i}} \in W_{min}^{I_r}$ such that $w_{\underline {i}}(\varpi_r) = -(\sum_{k=1}^{\frac {r}{2}}\alpha_{i_k})$, where $Z = \{(i_1,i_2,\cdots ,i_{\frac {r}{2}-2},n-2,n): i_k \in \{1,2,\cdots ,n-4\} \,\,\mbox{and} \,\, i_{k+1}-i_k \geq 2 \,\, \forall \,\, k\}$. Further, $W_{min}^{I_r}= \{ w_{\underline {i}}: \underline {i} \in J_{\frac {r}{2},n}\setminus Z\}$.
 
(iii) Let $r$ be an odd integer in $\{2,3, \cdots, n-2\}$. For any $\underline {i}=(i_1,i_2,\cdots ,i_{\frac {r-1}{2}}) \in 
J_{\frac {r-1}{2},n-3}$, there exists unique $w_{\underline {i}} \in W_{min}^{I_r}$  such that $w_{\underline {i}}(\varpi_r) 
= -(\sum_{k=1}^{\frac {r-1}{2}}\alpha_{i_k}+\frac{1}{2}\alpha_{n-1}+\frac {1}{2}\alpha_n)$. Also, for any $\underline {i}=(i_1,i_2,\cdots ,i_{\frac {r-1}{2}}) \in J_{\frac {r-1}{2},n-2}$, there exists unique $w_{\underline {i},1} \in W_{min}^{I_r}$  such that $w_{\underline {i},1}(\varpi_r) 
= -(\sum_{k=1}^{\frac {r-1}{2}}\alpha_{i_k}+\frac{1}{2}\alpha_{n-1}+\frac {3}{2}\alpha_n)$ and there exists unique $w_{\underline {i},2} \in W_{min}^{I_r}$  such that $w_{\underline {i},2}(\varpi_r) 
= -(\sum_{k=1}^{\frac {r-1}{2}}\alpha_{i_k}+\frac{3}{2}\alpha_{n-1}+\frac {1}{2}\alpha_n)$. Further, $W_{min}^{I_r}= \{ w_{\underline {i}}: \underline {i} \in J_{\frac {r-1}{2},n-3}\} \bigcup  \{w_{\underline{i},j}:\underline {i} \in J_{\frac {r-1}{2},n-2}\,\, \mbox{and}\,\, j = 1,2\}$.

(iv) Let $r=n-1$ or $n$. Then, $w= \prod_{i=1}^{[\frac {n-1}{2}]}w_i$, where, $$w_i =  \left \{ \begin{array}{l} \tau_is_n ~ ~  ~ ~ \mbox {if}\,\, i \,\,\mbox {is odd}.\\ 

 \tau_is_{n-1} ~ ~   ~ ~ \mbox {if}\,\, i \,\,\mbox {is even}.\\ 

\end{array} \right .$$ 

with, $\tau_i = s_{2i-1}\ldots s_{n-2}, \,\, i = 1,2,\cdots [\frac {n-1}{2}]$.

%(D) The described weights $-(\sum_{i_k}\alpha_{i_k})$ in part (A),(B),(C) are the only weights such that there exists $w \in W^{I_r}$, of minimal length for which $w\varpi = -(\sum_{i_k}\alpha_{i_k})$.
 
\end{theorem}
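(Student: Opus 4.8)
The plan is to reduce the whole statement, via Lemma 2.1, to a purely combinatorial question about extremal weights, and then to run a case analysis in which Proposition 3.1 is the decisive tool.

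\emph{Reduction.} Apply Lemma 2.1 with $\chi$ the least positive multiple of $\varpi_r$ lying in the root lattice. Then $\mathrm{Supp}(\chi)=\{\alpha_r\}$, so $I^c=I_r$, and since $\mathcal L_r$ is a positive power of $\mathcal L_\chi$ the lemma gives, for $\tau\in W^{I_r}$, that $X(\tau)_T^{ss}(\mathcal L_r)\neq\emptyset$ iff $\tau\varpi_r\leq 0$. Because $X(\tau)\subseteq X(\tau')$ whenever $\tau\leq\tau'$, the set $A_r:=\{\tau\in W^{I_r}:\tau\varpi_r\leq 0\}$ is an upper order ideal for the Bruhat order, so $W_{min}^{I_r}$ is exactly its set of Bruhat-minimal elements. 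As $\tau\varpi_r$ depends only on the coset $\tau W_{I_r}$ and $W^{I_r}$ parametrises the orbit $W\varpi_r$ bijectively, I would pass to standard coordinates $\epsilon_1,\dots,\epsilon_n$ (signed permutations): record $\varpi_r=\epsilon_1+\cdots+\epsilon_r$ for $r\leq n-1$ (and the half-sum expressions at the spin nodes), $\alpha_i=\epsilon_i-\epsilon_{i+1}$ for $i<n$, and $\alpha_n=\epsilon_n,\ 2\epsilon_n,\ \epsilon_{n-1}+\epsilon_n$ in types $B,C,D$. From this one derives an explicit ``partial sum'' criterion: a weight $\sum_j b_j\epsilon_j$ is $\leq 0$ iff all its initial partial sums $b_1+\cdots+b_m$ are $\leq 0$ (types $B,C$), resp. iff $b_1+\cdots+b_m\leq 0$ for $m\leq n-2$ together with $b_1+\cdots+b_{n-1}+|b_n|\leq 0$ (type $D$).

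\emph{The listed elements lie in $W_{min}^{I_r}$.} For each index tuple $\underline i$ in the statement I would take the displayed reduced word for $w_{\underline i}$ (resp. $w_{\underline i,j}$), check $w_{\underline i}(\alpha_j)>0$ for $j\neq r$ so that $w_{\underline i}\in W^{I_r}$, and compute $w_{\underline i}(\varpi_r)$ by applying the corresponding signed permutation, verifying the claimed value. The hypothesis $i_{k+1}-i_k\geq 2$ is precisely what makes $-\sum_k\alpha_{i_k}$ have $r$ nonzero $\epsilon$-coordinates with no cancellation, hence a genuine extremal weight; the partial-sum criterion then makes it $\leq 0$. In the odd-$r$ cases the spin-tail ($\alpha_n$, resp. $\tfrac12\alpha_n$, resp. $\tfrac12\alpha_{n-1}+\tfrac12\alpha_n$ and the two ``$\tfrac32$'' variants in type $D$) is forced by the parity of the number of nonzero $\epsilon$-coordinates of $\varpi_r$, and the tuples $(\dots,n-2,n)$ removed in the set $Z$ of type $D$ are exactly those where $-\alpha_{n-2}-\alpha_n=-\epsilon_{n-2}-\epsilon_n$ collapses to too few coordinates.

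\emph{Exhaustiveness and minimality (the crux).} Conversely let $w\in A_r$ be Bruhat-minimal and set $\mu=w\varpi_r$. For every simple root $\beta$ with $s_\beta w<w$ (equivalently $w^{-1}\beta<0$), minimality forces $(s_\beta w)\varpi_r=\mu-\langle\mu,\check\beta\rangle\beta\not\leq 0$; reading all such constraints in $\epsilon$-coordinates through the partial-sum criterion pins down the sign pattern of $\mu$: every negative $\epsilon$-entry is immediately matched by a positive entry to its right, no two negative entries are adjacent, and there is no slack, which is exactly the shape $-\mu=\sum_k\alpha_{i_k}$ with $i_{k+1}-i_k\geq 2$ (plus the minimal spin-tail for odd $r$). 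Proposition 3.1 enters twice here: it gives $|\langle\mu,\check\beta\rangle|\leq 2$, without which the descent above cannot be controlled, and — after applying $-w_0$, an automorphism preserving $\mathbb R_{\geq 0}S$ and the dominance order up to the diagram involution $r\mapsto r^*$ — it identifies the coefficient pattern of $-\mu$ in the simple roots (largest coefficient $1$, or $\tfrac32$ only in type $D$ with $r$ odd), which is precisely the trichotomy of cases (ii)/(iii) and the source of the two extra families $w_{\underline i,1},w_{\underline i,2}$. Finally one checks that distinct $\underline i$ give Bruhat-incomparable elements, so each listed element is genuinely minimal, and that the index sets ($J$-sets, with $Z$ deleted, resp. the two $\tfrac32$-families) account for all of them. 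The cases $r=1$ and $r=n$ (resp. $n-1,n$) are handled directly from the coordinate description, the orbit of $\varpi_1$ forcing $-\epsilon_1$ as the unique minimal $\leq 0$ extremal weight.

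I expect Paragraph 3 to be the main obstacle: the descent bookkeeping showing that \emph{every} Bruhat-minimal $w$ with $w\varpi_r\leq 0$ has one of the listed forms, especially the organisation of the type-$D$ odd-$r$ case with its three families and the exceptional set $Z$. Proposition 3.1 is the indispensable input that keeps the root-coefficients bounded throughout this analysis.
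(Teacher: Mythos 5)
Your reduction via Lemma 2.1 and the passage to $\epsilon$-coordinates are sound, and the observation that $W^{I_r}$ parametrises the orbit $W\varpi_r$ bijectively in fact gives existence and uniqueness of $w_{\underline i}$ more cleanly than the paper does: the paper instead constructs each $w_{\underline i}$ by induction on the partial order $(i_1,\dots,i_{r/2})\leq(j_1,\dots,j_{r/2})$ iff $i_k\leq j_k$ for all $k$, starting from the maximal tuple $(n-r+1,n-r+3,\dots,n-1)$ with an explicit reduced word and descending by left multiplication with $s_{1+i_t}s_{i_t}$. Your partial-sum criterion is correct in all three types, and your explanation of the excluded set $Z$ (the cancellation $-\alpha_{n-2}-\alpha_n=-\epsilon_{n-2}-\epsilon_n$ dropping the weight out of the orbit of $\varpi_r$) shows you have the right picture. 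One small correction: there are no ``displayed reduced words'' for general $\underline i$ in cases (ii)--(iii), so the verification step must go entirely through the orbit computation; and pairwise incomparability of the listed elements does not by itself yield their minimality --- it does so only in combination with the exhaustiveness statement below.

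The genuine gap is your third paragraph, and it is the mathematical content of the theorem: the clauses ``$W_{min}^{I_r}=\{\dots\}$'' amount to the claim that every Bruhat-minimal $w$ with $w\varpi_r\leq 0$ has $w\varpi_r$ of exactly the listed shape, and you state the conclusion of that analysis (``every negative entry immediately matched, no two adjacent, no slack'') without deriving it from the descent constraints. Two points in particular need real work. First, Proposition 3.1 does not apply directly: it bounds the coefficients of $w\varpi_r$ for $w$ \emph{maximal} in $W^{I_r}$ with $w\varpi_r\geq 0$, so to use it for minimal $w$ with $w\varpi_r\leq 0$ you must pass through $w\mapsto w_0w$, replace $w_0w$ by its minimal coset representative modulo $W_{I_{r^*}}$ (where $r^*$ is the image of $r$ under the diagram involution), and check that maximality is preserved; none of this bookkeeping is set up. Second, the type-$D$, odd-$r$ case is exactly where the theorem is delicate --- the three families $w_{\underline i}$, $w_{\underline i,1}$, $w_{\underline i,2}$ with tails $\tfrac12\alpha_{n-1}+\tfrac12\alpha_n$, $\tfrac12\alpha_{n-1}+\tfrac32\alpha_n$, $\tfrac32\alpha_{n-1}+\tfrac12\alpha_n$, and the exclusion of $\tfrac32\alpha_{n-1}+\tfrac32\alpha_n$, which rests precisely on the final assertion of Proposition 3.1 --- and your sketch does not show how the descent analysis produces these coefficients and no others. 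Until that classification is actually carried out (the paper itself only executes it for type $B$, even $r$, ruling out weights $-\sum_k\alpha_{i_k}$ with an adjacent pair, and defers the rest to ``similar''), the proposal is a plan rather than a proof.
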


\begin{proof}

{\underline {\it Proof of 1}:}

(i) $\varpi_1 = \alpha_1+\alpha_2+ \ldots +\alpha_n.$

    Take $w = s_ns_{n-1}\ldots s_1$. Then $w(\varpi_1)= -\alpha_n \leq 0$.

(ii) Let $r$ be an even integer in $\{2,3, \cdots, n-2\}$. 

We have, $\varpi_r = \sum_{i=1}^{r-1}i\alpha_i +r(\alpha_r+ \ldots +\alpha_n),\,\, 
4 \leq  r \leq (n-1).$ 

Now, $J_{\frac {r}{2},n-1}= \{(i_1,i_2,\cdots ,i_{\frac {r}{2}}): i_k \in \{1,2,\cdots ,n-1\}\,\, \mbox{and} \,\, i_{k+1}-i_k \geq 2 \,\, \forall \,\, k\}$. Consider the partial order on $J_{\frac {r}{2},n-1}$, given by $(i_1,i_2,\cdots ,i_{\frac {r}{2}}) \leq (j_1,j_2,\cdots ,j_{\frac {r}{2}})$ if $i_k \leq j_k \,\,
\forall \,\, k$ and $(i_1,i_2,\cdots ,i_{\frac {r}{2}}) < (j_1,j_2,\cdots ,j_{\frac {r}{2}})$ if 
 $i_k < j_k$ for some $k$. We will prove the theorem by induction on this order. 

For $(j_1,j_2,\cdots ,j_{\frac {r}{2}}) = (n-r+1, n-r+3, \cdots, n-1)$, we have 

$(s_{n-r+1}\ldots s_1)(s_{n-r+3}\ldots s_2)\ldots (s_{n-1}\ldots s_{\frac {r}{2}})(s_ns_{n-1}\ldots s_{\frac {r}{2}+1})(s_ns_{n-1}\ldots s_{\frac {r}{2}+2})$\\
$\ldots (s_ns_{n-1}\ldots s_r)(\varpi_r)= -(\sum_{t=1}^{\frac {r}{2}}\alpha_{n-r+2t-1})$.

Now, if $(i_1,i_2,\cdots ,i_{\frac {r}{2}}) \in J_{\frac {r}{2},n-1}$ is not maximal, then, there 
exists $t$ maximal  such that $i_t< n-r+2t-1$. 

Now, $(i_1,i_2,\cdots ,i_{t-1},1+i_t,i_{t+1},\cdots ,i_{\frac {r}{2}}) \in J_{\frac {r}{2},n-1}$ and 
$(i_1,i_2,\cdots ,i_{t-1},1+i_t,i_{t+1},\cdots ,i_{\frac {r}{2}}) > (i_1,i_2,\cdots ,i_{\frac {r}{2}})$. So, by induction, there exists $w_1 \in W^{I_r}$ such that $w_1\varpi_r = -(\sum_{k \neq t}\alpha_{i_k}+\alpha_{1+i_t})$. Taking $w= s_{1+i_t}s_{i_t}w_1$ we have $w\varpi_r = -(\sum_{k=1}^{\frac {r}{2}}\alpha_{i_k})$.     

Hence, for any $(i_1,i_2,\cdots ,i_{\frac {r}{2}}) \in J_{\frac {r}{2},n-1}$, there exists 
$w \in W^{I_r}$ of minimal length such that $w\varpi_r = -(\sum_{k=1}^{\frac {r}{2}}\alpha_{i_k})$.

Now, we will prove that the $w's$ in $W^{I_r}$ having this property are minimal. 

Let $w \in W^{I_r}$ such that $w\varpi_r = -(\sum_{k=1}^{\frac {r}{2}}\alpha_{i_k})$. 

Suppose $w$ is not minimal. Then there exist $\beta \in R^{+}$ such that 
$s_{\beta}w(\varpi_r) \leq 0$ and $l(s_{\beta}w)=l(w)-1$. Since  $s_{\beta}w(\varpi_r) \leq 0$, and  $i_{k+1}- i_k \geq 2 \,\, \forall \,\, k$, $\beta = \alpha_{i,k}$ for some $k = 1,2, \cdots \frac {r}{2}$. 

Since  $l(s_{\beta}w)=l(w)-1$, $\beta = \alpha_{i_t}$ for some $t$. Hence, $s_{\beta}w(\varpi_r)=-(\sum_{k \neq t}\alpha_{i_k}+)\alpha_{i_t} \nleq 0$, a contradiction. Thus, all the $w's$ are minimal. 

Now, it  remains to prove that for all elements of the type $-(\sum_{k=1}^{\frac {r}{2}}\alpha_{i_k})$ in the weight lattice such that $ \langle \alpha_{i_k},\alpha_{i_{k+1}} \rangle \neq 0$, for some\,\, $k$, there does not exist $w \in W^{I_r}$, of minimal length such that  $w\varpi_r = -(\sum_{k=1}^{\frac {r}{2}}\alpha_{i_k})$.

Let $\mu = -(\sum_{k=1}^{\frac {r}{2}}\alpha_{i_k})$ be such that $ \langle \alpha_{i_k},\alpha_{i_{k+1}} \rangle \neq 0$ for some $k$. Choose $k$ minimal such that $ \langle \alpha_{i_k},\alpha_{i_{k+1}} \rangle \neq 0$.

If $i_k=n-1$, then $i_{k+1}=1$ and $s_nw(\varpi_n) = -(\sum_{i_j\neq n}\alpha_{i_j})> -(\sum_{k=1}^{\frac {r}{2}}\alpha_{i_k})$. Hence, $s_nw < w$, a contradiction to the minimality of $w$.

Otherwise, $s_{i_k}w(\varpi_r)= -(\sum_{j\neq k}\alpha_{i_j})> -(\sum_{k=1}^{\frac {r}{2}}\alpha_{i_k})$. Hence, $s_{i_k}w < w$, a contradiction to the minimality of $w$.

(iii)  Let $r$ be an odd integer in $\{2,3, \cdots, n-1\}$. 

The proof is similar the case when $r$ is even.

(iv) We have, $\varpi_n = \frac {1}{2}\sum_{i=1}^{n}i\alpha_i.$

Then, $2\varpi_n = \sum_{i=1}^{n}i\alpha_i.$ 

$\underline{Case \,\, 1:}$ $n$ is even.

Take $w_i = s_{2i-1}\ldots s_n, \,\,  i = 1,2, \cdots \frac {n}{2}$.

Let $w = w_{\frac {n}{2}} \cdots w_1$. Then $w(2\varpi_n) = -\sum_{i=1}^{\frac {n}{2}}\alpha_{2i-1} \leq 0$.

$W_{min}^{I_r}=\{w_{\underline{i}}:\underline{i} \in J_{\frac {r}{2}, n-1}\}$ follows from lemma (2.1).

$\underline{\it{Case \,\, 2:}}$ $n$ is odd.

Take $w_i =  s_{2i-1}\ldots s_n, \,\,  i = 1,2, \cdots,  \frac {n+1}{2}.$

Let $w = w_{\frac {n+1}{2}} \cdots w_1$. Then $w(2\varpi_n) = -\sum_{i=1}^{\frac {n+1}{2}}\alpha_{2i-1} \leq 0$.

{\underline {\it Proof of 2}:}

(i) We have, $\varpi_1 = \alpha_1+\alpha_2+ \ldots +\frac {1}{2}\alpha_n.$

Then, $2\varpi_1 = 2(\alpha_1+\alpha_2+ \ldots +\alpha_{n-1})+\alpha_n.$

    Take $w = s_ns_{n-1}\ldots s_1$. Then $w(2\varpi_1)= -\alpha_n \leq 0$.

 Proof of (ii) and (iii) are similar to Cases (ii) and (iii) of type $B$.

 {\underline {\it Proof of 3}:} 

(i) We have, $\varpi_1 = \sum_{i=1}^{n-2}\alpha_i+\frac {1}{2}(\alpha_{n-1}+\alpha_n).$

Then, $2\varpi_1 = 2(\sum_{i=1}^{n-2}\alpha_i) +\alpha_{n-1}+\alpha_n.$

    Take $w = s_ns_{n-1}\ldots s_1$. Then $w(2\varpi_1)= -(\alpha_{n-1}+\alpha_n) \leq 0$.

Proof of (ii) and (iii) are very similar to Cases (ii) and (iii) of type $B$.

(iv) We have, $\varpi_{n-1}=\frac{1}{2}(\alpha_1+2\alpha_2+\ldots +(n-2)\alpha_{n-2})+\frac{1}{4}(n\alpha_{n-1}+(n-2)\alpha_n).$

Then, $4\varpi_{n-1}=2(\alpha_1+2\alpha_2+\ldots +(n-2)\alpha_{n-2})+n\alpha_{n-1}+(n-2)\alpha_n$

Take $$w_i =  \left \{ \begin{array}{l} \tau_is_{n-1} ~ ~  ~ ~ \mbox {if}\,\, i \,\,\mbox {is odd}.\\ 

 \tau_is_n ~ ~   ~ ~ \mbox {if}\,\, i \,\,\mbox {is even}.\\ 

\end{array} \right .$$ 

where, $\tau_i = s_{2i-1}\ldots s_{n-2}, \,\, i = 1,2,\cdots [\frac {n-1}{2}]$.

Let $w=\prod_{i=1}^{[\frac {n-1}{2}]}w_i$. Then,
 $$w(4\varpi_{n-1}) =  \left \{ \begin{array}{l} \mu-2\alpha_n  ~ ~  ~ ~ \mbox {if}\,\, n  \equiv 0 \pmod 4,\\ 

 \mu-2\alpha_{n-1} ~ ~   ~ ~ \mbox {if}\,\, n \equiv 2 \pmod 4,\\ 

 \mu-2\alpha_{n-2}-3\alpha_{n-1}-\alpha_n  ~ ~   ~ ~ \mbox {if}\,\, n  \equiv 1 \pmod 4,\\

\mu-2\alpha_{n-2}-\alpha_{n-1}-3\alpha_n  ~ ~   ~ ~ \mbox {if}\,\, n \equiv 3 \pmod 4,\\

\end{array} \right .$$ 

where, $\mu = -2(\sum_{i=1}^{[\frac {n-1}{2}]}\alpha_{2i-1})$.

 We have, $\varpi_{n}=\frac{1}{2}(\alpha_1+2\alpha_2+\ldots +(n-2)\alpha_{n-2})+\frac{1}{4}((n-2)\alpha_{n-1}+n\alpha_n).$

Then, $4\varpi_n=2(\alpha_1+2\alpha_2+\ldots +(n-2)\alpha_{n-2})+(n-2)\alpha_{n-1}+n\alpha_n$. 

Take $$w_i =  \left \{ \begin{array}{l} \tau_is_n ~ ~  ~ ~ \mbox {if}\,\, i \,\,\mbox {is odd}.\\ 

 \tau_is_{n-1} ~ ~   ~ ~ \mbox {if}\,\, i \,\,\mbox {is even}.\\ 

\end{array} \right .$$ 

where, $\tau_i = s_{2i-1}\ldots s_{n-2}, \,\, i = 1,2,\cdots [\frac {n-1}{2}]$.

Let $w= \prod_{i=1}^{[\frac {n-1}{2}]}w_i$. Then,  
$$w(4\varpi_n) =  \left \{ \begin{array}{l} \mu-2\alpha_{n-1}  ~ ~  ~ ~ \mbox {if}\,\, n \equiv 0 \pmod 4 ,\\ 

 \mu-2\alpha_n ~ ~   ~ ~ \mbox {if}\,\, n \equiv 2 \pmod 4,\\ 

 \mu-2\alpha_{n-2}-\alpha_{n-1}-3\alpha_n  ~ ~   ~ ~ \mbox {if}\,\, n \equiv 1 
\pmod 4 ,\\

\mu-2\alpha_{n-2}-3\alpha_{n-1}-\alpha_n  ~ ~   ~ ~ \mbox {if}\,\, n \equiv 3 
\pmod 4,\\

\end{array} \right .$$ 

where, $\mu = -2(\sum_{i=1}^{[\frac {n-1}{2}]}\alpha_{2i-1})$.

\end{proof}

\section{Coxeter elements admitting semistable points}
In this section, we describe all Coxeter elements $w\in W$ for which the 
corresponding Schubert variety  $X(w)$ admit a semistable point for the 
action of a maximal torus with respect to a non-trivial line bundle on $G/B$.

Now, let us assume that the root system is irreducible, see page 52 of [2]. 

\underline{\large\bf{Coxeter elements of Weyl group:}}

An elememt $w \in W$ is said to be a Coxeter element if it is of the form 
$w= s_{i_1}s_{i_2} \ldots s_{i_n}$, with $ s_{i_j} \neq s_{i_k}$ unless $j=k$, 
see page 74 of [3].

Let $ \chi = \sum_{\alpha \in S}a_{\alpha}\alpha$ be a non-zero dominant weight and 
let $w$ be a Coxeter element of $W$.

\begin{lemma}
If $w\chi \leq 0$ and $\alpha \in S$ is such that $\l(ws_{\alpha}) =
l(w)-1$, then,\\

$(1)\,\,  |\{\beta \in S \setminus\{\alpha\}: \langle \beta,\check{\alpha} 
\rangle \neq 0 \}| = 1 \,\, \mbox{or} \,\,  2.$\\

$(2)$ \,\, Further if $|\{\beta \in S \setminus\{\alpha\}: \langle \beta,
\check{\alpha} \rangle \neq 0 \}| = 2$, then $R$ must be of type $A_3$ and 
$\chi$ is of the form $a(2\alpha+\beta+\gamma)$ for some $a \in \mathbb 
Z_{\geq 0}$, where $\alpha,\beta$ and $\gamma$ are labelled as $\circ_{_\beta}$---$\circ_{_\alpha}$---$\circ_{_\gamma}$.
\end{lemma}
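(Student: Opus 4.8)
The plan is to work entirely inside the Coxeter element structure. Recall that a Coxeter element $w = s_{i_1}\cdots s_{i_n}$ uses every simple reflection exactly once; a standard fact is that $l(ws_\alpha) = l(w) - 1$ precisely when $\alpha$ can be moved to the right end of some reduced word for $w$, equivalently when $\alpha = i_n$ in a suitable ordering. Fix such an ordering, so $w = w' s_\alpha$ with $w'$ a Coxeter element of $W_{S\setminus\{\alpha\}}$ and $l(w) = l(w') + 1$. I would first record the elementary computation $w\chi = w'\chi - \langle \chi, \check\alpha\rangle\, w'\alpha$. Since $\chi$ is dominant and $w'$ fixes the subspace spanned by $S\setminus\{\alpha\}$ appropriately, I want to extract sign information on the coefficient of $\alpha$ (and of its neighbours) in $w\chi$ from the hypothesis $w\chi \le 0$.

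The key step is a local analysis at the vertex $\alpha$ in the Dynkin diagram. Let $N = \{\beta \in S\setminus\{\alpha\} : \langle\beta,\check\alpha\rangle \ne 0\}$ be the set of neighbours of $\alpha$; since the diagram is connected and $\alpha$ is a single vertex, $|N| \in \{1,2,3\}$ (the value $3$ occurring only at the branch node of $D_4$ — or more precisely any diagram with a triple node, which among the relevant types is $D_n$). So part (1) amounts to ruling out $|N| = 3$, and part (2) to pinning down the case $|N| = 2$. For this I would compute the coefficient of $\alpha$ in $w\chi$: writing $\chi = \sum_{\beta} a_\beta \beta$ with all $a_\beta \ge 0$ (dominance in the root lattice, as in the setup of Lemma 2.1), and using that $s_\alpha$ followed by the rest of $w'$ only reflects in roots not involving... — more efficiently, I would track $\langle w\chi, \check\alpha \rangle$ versus $\langle w\chi, \check\beta\rangle$ for neighbours $\beta$, exploiting that $w\chi \le 0$ forces all these pairings to have controlled signs, while the fundamental-weight bound $|\langle\varpi_r,\check\gamma\rangle| \le 2$ proved just above (and hence $|\langle\chi,\check\gamma\rangle|$ bounded by a correspondingly small multiple of the coefficients) constrains how large the neighbour contributions can be.

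Concretely, if $|N| = 3$ then $\alpha$ is the trivalent node, $w' = s_{\beta_1} s_{\beta_2} s_{\beta_3}\cdot(\text{rest})$ with the three legs attached, and a direct computation of $w\chi$ shows the coefficient of $\alpha$ becomes $2a_\alpha - (a_{\beta_1} + a_{\beta_2} + a_{\beta_3})$ adjusted by the reflections along the legs; chasing signs against $w\chi \le 0$ and dominance yields a contradiction unless $\chi = 0$, which is excluded. If $|N| = 2$, say neighbours $\beta,\gamma$ of $\alpha$, the same bookkeeping shows that $w\chi \le 0$ can hold only if the legs beyond $\beta$ and $\gamma$ are empty — i.e.\ $\beta$ and $\gamma$ are themselves leaves — forcing the diagram to be the path $\beta - \alpha - \gamma$, which is $A_3$; and then solving $w\chi \le 0$ explicitly with $w = s_\beta s_\gamma s_\alpha$ (or the relevant order) gives $w\chi = -a_\beta\beta - a_\gamma\gamma - (a_\beta + a_\gamma - a_\alpha)(\beta+\alpha+\gamma)$-type expressions, and comparing with the dominance/minuscule-type bound $\langle\chi,\check\alpha\rangle \le 2$ pins $\chi$ down to $a(2\alpha + \beta + \gamma)$ with $a \in \mathbb{Z}_{\ge 0}$.

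The main obstacle I anticipate is the bookkeeping in the $|N|=2$ and $|N|=3$ cases: one must carefully use a reduced expression for $w$ in which the relevant simple reflection appears last, then propagate the reflections through $\chi$ keeping track of which coefficients change sign, and finally combine this with both the inequality $w\chi \le 0$ and the coefficient bound from the fundamental-weight lemma. The non-simply-laced types ($B_n$, $C_n$, $F_4$, $G_2$) need a separate word because $\langle\beta,\check\alpha\rangle$ can be $-2$ or $-3$ there; but one checks that in those types a node with two or more neighbours always has a neighbour lying on a path that, together with the multiple bond, makes $w\chi \le 0$ impossible — so the conclusion $|N|\le 2$, and $|N|=2 \Rightarrow A_3$, survives. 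Organizing these subcases cleanly, rather than any single hard estimate, is where the real work lies.
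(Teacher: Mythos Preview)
Your overall strategy (local analysis at $\alpha$, exploit $w\chi\le 0$ together with dominance of $\chi$) is the right one and matches the paper's approach, but several key steps in your plan are either missing or incorrect.

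\textbf{The central observation you need.} The paper's proof hinges on one clean fact you never quite state: since $w = w's_\alpha$ with $w'$ a product of the simple reflections $s_\beta$ for $\beta\neq\alpha$, each such $s_\beta$ leaves the $\alpha$-coefficient (in the simple-root expansion) unchanged. Hence the coefficient of $\alpha$ in $w\chi$ equals the coefficient of $\alpha$ in $s_\alpha\chi$, which is $-a_\alpha - \sum_{\beta\neq\alpha}\langle\beta,\check\alpha\rangle a_\beta$. Combining $w\chi\le 0$ with dominance ($2a_\beta \ge a_\alpha$ for each neighbour $\beta$) gives $|N|\cdot a_\alpha \le 2a_\alpha$, so $|N|\le 2$. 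Your formula ``$2a_\alpha-(a_{\beta_1}+a_{\beta_2}+a_{\beta_3})$'' is $\langle\chi,\check\alpha\rangle$, not the $\alpha$-coefficient of $w\chi$; and your suggestion to track $\langle w\chi,\check\alpha\rangle$ instead would require knowing the $\beta$-coefficients of $w\chi$, which are not so accessible.

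\textbf{Two genuine errors.} First, the bound $|\langle\varpi_r,\check\gamma\rangle|\le 2$ from Section~3 is irrelevant here: $\chi$ is an arbitrary dominant weight in the root lattice, not a fundamental weight, and the paper's proof never uses that bound --- only dominance of $\chi$ and positivity of the $a_\beta$. Second, $|N|=3$ does not occur ``only at the branch node of $D_4$'': it occurs at the trivalent node of every $D_n$ ($n\ge 4$) and of $E_6,E_7,E_8$, so your case analysis as written is incomplete. For part~(2), the paper shows $S=\{\alpha,\beta,\gamma\}$ by combining the dominance inequalities at $\beta$ and $\gamma$ with the $\alpha$-coefficient inequality to force $\langle\delta,\check\beta\rangle=\langle\delta,\check\gamma\rangle=0$ for all $\delta\neq\alpha,\beta,\gamma$; it then rules out non-simply-laced bonds by the same style of inequality chase. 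Your ``$-a_\beta\beta-a_\gamma\gamma-(a_\beta+a_\gamma-a_\alpha)(\beta+\alpha+\gamma)$-type expressions'' is not a correct computation of $w\chi$ in $A_3$, and the final pinning down of $\chi$ requires equality in both the dominance and the $\alpha$-coefficient inequalities, not the Section~3 bound.
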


\begin{proof}
Since $S$ is irreducible and $\chi$ is  non zero dominant 
weight, $a_{\beta}$ is a positive rational number for each $\beta \in S$. 
Further since $w\chi \leq 0,\,\, \chi$ must be in the root lattice 
and so $a_{\beta}$ is a positive integer for every $\beta$ in $S$.

Since $w$ is a Coxeter element and $\l(ws_{\alpha}) = l(w)-1$, the 
coefficient of $\alpha$ in $w\chi$ = coefficient of $\alpha$ in 
$s_{\alpha}\chi. \hspace{8.8cm} \longrightarrow (1)$

We have \,\,  $s_{\alpha}\chi  =  \chi - \langle \chi, \check{\alpha} 
\rangle \alpha$

\hspace{2.6cm} $ =  \chi - \langle \sum_{\beta \in S}a_{\beta}\beta, 
\check{\alpha} \rangle \alpha $

 \hspace{2.6cm} $ =  \sum_{\beta \in S}a_{\beta}\beta - \sum_{\beta \in S}a_{\beta} \langle \beta, 
\check{\alpha} \rangle \alpha.$

The coefficient of $\alpha$ in $s_{\alpha}\chi$ is $-(\sum_{\beta \in S \setminus \{\alpha\}}\langle 
\beta, \check{\alpha} \rangle a_{\beta}+a_{\alpha}).\hspace{2.2cm} \longrightarrow (2) $

%Now, if $|\{\beta \in S \setminus \{\alpha\}: \langle \beta,\check{\alpha} \rangle \neq 0 \}| 
%\geq 3$, then there are simple roots $\beta_1,\beta_2,\ldots ,\beta_r, \,\, r \geq 3$ such that 
%$\langle \beta_i,\check{\alpha} \rangle \neq 0$. 
%Then $\langle \beta_i,\check{\alpha} \rangle \leq -1, \,\, \forall \,\, i = 1,2, \ldots r$.

Since $w\chi \leq 0$, from (1) and (2) we have 

$\hspace{4cm}  -(\sum_{\beta \in S \setminus \{\alpha\}}\langle \beta, \check{\alpha} \rangle 
a_{\beta}+a_{\alpha}) \leq 0$.

Hence, \,\, $-(\sum_{\beta \in S \setminus \{\alpha\}}\langle \beta, \check{\alpha} \rangle 
a_{\beta}) \leq a_{\alpha}$ 

Thus, we have \,\, $-2(\sum_{\beta \in S \setminus \{\alpha\}}\langle \beta, \check{\alpha} 
\rangle a_{\beta}) \leq 2a_{\alpha}$. $\hspace{3.8cm}\longrightarrow (3)$

Since $\chi$ is dominant, we have,
\[
\begin{array}{rcl}
\langle \chi, \check{\beta} \rangle \geq 0, \,\, \forall \,\, \beta \in S \\
\\
\Rightarrow \langle \sum_{\gamma \in S}a_{\gamma} \gamma, \check{\beta} \rangle \geq 0\\
\\
\Rightarrow \sum_{\gamma \in S}a_{\gamma} \langle \gamma, \check{\beta} \rangle \geq 0
\end{array}
\]
Now if $\langle \beta, \check{\alpha} \rangle \neq 0$, the left hand side of the inequality is  
$ 2a_{\beta}-a_{\alpha}$ -(a non-negative integer).

Thus, we have, $2a_{\beta} \geq a_{\alpha}$ if $\langle \beta, \check{\alpha} \rangle \neq 0 
\hspace{3.8cm}\longrightarrow (4)$.

Now if $|\{\beta \in S \setminus\{\alpha\}: \langle \beta,\check{\alpha} \rangle \neq 0 \}| 
\geq 3$, from (3) and (4) we have,

\hspace{4cm} $3a_{\alpha} \leq -(2\sum_{\beta \in S \setminus \{\alpha\}}\langle \beta, 
\check{\alpha} \rangle a_{\beta}) \leq 2a_{\alpha}$.

This is a contradiction to the fact that $a_{\alpha}$ is a positive integer.

So $|\{\beta \in S \setminus\{\alpha\}: \langle \beta,\check{\alpha} \rangle \neq 0 \}| \leq 2$.

{\bf\underline{Proof of (2):}}

Suppose  $|\{\beta \in S \setminus\{\alpha\}: \langle \beta,\check{\alpha} \rangle \neq 0 \}| 
= 2$. Let $\beta, \gamma$ be the two distinct elements of this set.

Using (3) and the facts  $ \langle \beta, \check{\alpha} \rangle \leq -1$, $\langle \gamma, 
\check{\alpha} \rangle \leq -1 $,  we have 

\[
\begin{array}{rcl}
2(a_{\beta}+a_{\gamma}) \leq -2(\langle \beta, \check{\alpha} \rangle a_{\beta}+\langle \gamma, 
\check{\alpha} \rangle a_{\gamma}) \leq 2a_{\alpha}\\
\end{array}\hspace{2.5cm}\longrightarrow (5)
\]

Since $\langle \chi, \check{\beta} \rangle \geq 0$ and  $\langle \chi, \check{\gamma} \rangle 
\geq 0$ we have 

\hspace{2cm} $2a_{\beta} \geq - \sum_{\delta \neq \beta, \alpha}\langle \delta, \check{\beta} 
\rangle a_{\delta} +a_{\alpha}$ and $2a_{\gamma} \geq - \sum_{\delta \neq \gamma, \alpha}\langle 
\delta, \check{\gamma} \rangle a_{\delta} +a_{\alpha}$. 

Hence, \,\,   $- \sum_{\delta \neq \beta, \alpha}\langle \delta, \check{\beta} \rangle a_{\delta} 
- \sum_{\delta \neq \gamma, \alpha}\langle \delta, \check{\gamma} \rangle a_{\delta} +2a_{\alpha} 
\leq 2(a_{\beta}+a_{\gamma})$.

Using (5), we get 
\[
\begin{array}{lcl}
- \sum_{\delta \neq \beta, \alpha}\langle \delta, \check{\beta} \rangle  a_{\delta}- \sum_{\delta 
\neq \gamma, \alpha}\langle \delta, \check{\gamma} \rangle  a_{\delta} +2a_{\alpha} \leq 2a_{\alpha}.
\\

\\
\Rightarrow    \sum_{\delta \neq \gamma, \beta, \alpha}\langle -\delta, \check{\beta} \rangle  
a_{\delta}+ \sum_{\delta \neq \gamma, \beta, \alpha}\langle -\delta, \check{\gamma} \rangle  
a_{\delta}\leq 0,\,\, \mbox{since}\,\, \langle \beta, \check{\gamma} \rangle = \langle \gamma, \check{\beta} \rangle = 0
\end{array}
\]

Since each $a_{\delta}$ is positive and $\langle -\delta, \check{\beta} \rangle, \langle 
-\delta, \check{\gamma} \rangle$ are non-negative integers, we have 

$\langle -\delta, \check{\beta} \rangle = 0$ and $\langle -\delta, 
\check{\gamma} \rangle = 0, \,\, \forall \,\, \delta \neq \alpha, \beta,\gamma$.

Since $R$ is irreducible, we have $S= \{\alpha,\beta,\gamma \}$. 
So, from the classification theorem ( see page 57 and 58 of [2]) of irreducible root systems, 
we have 
$\langle \beta, \check{\alpha} \rangle \in \{-1,-2\}$.

If $\langle \beta, \check{\alpha} \rangle = -2 $, then $ \langle \gamma, 
\check{\alpha} \rangle = -1$.

Hence, from (3) we get 
\hspace{0.5cm}  $4a_{\beta}+2a_{\gamma} \leq 2a_{\alpha} \hspace{3cm} \longrightarrow (6)$ 

Again, from (4) we have $2a_{\beta} \geq a_{\alpha}$ and $ 2a_{\gamma} 
\geq a_{\alpha}$. So using (6), we get 
$3a_{\alpha} \leq 4a_{\beta}+2a_{\alpha}\leq 2a_{\alpha}$, a contradiction to the 
fact that $a_{\alpha}$ is a positive integer. Thus  $\langle \beta, 
\check{\alpha} \rangle = -1 $.

 Using a similar argument, we see that  $ \langle \gamma, \check{\alpha} 
\rangle = -1$.
 
Now, let us assume that  $\langle \alpha, \check{\beta} \rangle = -2$.

Then, \[
\begin{array}{rcl}
  0 \leq \langle \chi, \check{\beta} \rangle &=& a_{\gamma} \langle \gamma, 
\check{\beta} \rangle -2a_{\alpha}+2a_{\beta} \\
&=& -2a_{\alpha}+2a_{\beta}, \,\, \mbox{since} \,\,  \langle \gamma, 
\check{\beta} \rangle = 0\\
\Rightarrow 2a_{\alpha}\leq 2a_{\beta}.
\end{array}
\]
From (3), we have \hspace{1cm} $2a_{\beta}+2a_{\gamma} \leq 2a_{\alpha} \leq 
2a_{\beta}  $.\\
 Hence,  
$2a_{\gamma} \leq 0$, a contradiction.
So  $\langle \alpha, \check{\beta} \rangle = -1$. Similarly $\langle \alpha, 
\check{\gamma} \rangle = -1$.

Hence $R$ is of the type $A_3$.

\begin{center}
$\circ_{_\beta}$------$\circ_{_\alpha}$------$\circ_{_\gamma}$
\end{center}

We now show that  $\chi = a(\beta+2\alpha+\gamma)$, for some 
$ a \in \mathbb Z_{\geq 0}$. 

Let $\chi = a_{\alpha}\alpha+a_{\beta}\beta+a_{\gamma}\gamma$. By assumption, we
have $s_{\gamma}s_{\beta}s_{\alpha}(\chi) \leq 0$.

So $(a_{\beta}+a_{\gamma}-a_{\alpha})\alpha+(a_{\beta}-a_{\alpha})\gamma+
(a_{\gamma}-a_{\alpha})\beta \leq 0$.

Hence, we have $a_{\beta}+a_{\gamma} \leq a_{\alpha} \hspace{5cm} \longrightarrow (7)$  

Since $\chi$ is dominant, 
we have $\langle \chi, \check{\beta} \rangle \geq 0$ and $\langle \chi, 
\check{\gamma} \rangle \geq 0$.

So we have, $a_{\alpha}\leq 2a_{\beta}$ and $a_{\alpha}\leq 2a_{\gamma} \hspace{5cm} \longrightarrow (8)$.

Using (7) and (8),  $2a_{\alpha} \geq 2(a_{\beta}+a_{\gamma}) \geq 2a_{\alpha}$. This is 
possible only if $2a_{\beta}= a_{\alpha}= 2a_{\gamma}$.

Then, $\chi$ must be of the form $a(\beta+2\alpha+\gamma)$, for some 
$ a \in \mathbb Z_{\geq 0}$. 
\end{proof}

  Now for given an irreducible root system $R$, we describe all 
the Coxeter 
elements $w \in W$ for which there is a non-zero dominant weight $\chi$ such 
that $w\chi \leq 0$. For the Dynkin diagrams and labelling of simple roots, we refer to 
page 58 of [2].

\begin{theorem}
(A) \underline{Type $A_n$}: (1) $A_3$:  For any Coxeter element $w$, $X(w)_{T}^{ss}(\mathcal L_{\chi} \neq \emptyset$ for some non-zero dominant weight. 

(2) $A_n, n \geq 4$:  If $X(w)_{T}^{ss}(\mathcal L_{\chi} \neq \emptyset$ for some non-zero dominant weight and $w$ is a Coxeter element, then $w$ must be either $s_ns_{n-1} \ldots s_1$ or $s_i \ldots s_1s_{i+1}\ldots s_n$ for some $1 \leq i \leq n-1$.

(B) \underline{Type $B_n$}: (1) $B_2$:  For any Coxeter element $w$, $X(w)_{T}^{ss}(\mathcal L_{\chi} \neq \emptyset$ for some non-zero dominant weight.  

(2) $B_n, n \geq 3$:  If $X(w)_{T}^{ss}(\mathcal L_{\chi} \neq \emptyset$ for some non-zero dominant weight and $w$ is a Coxeter element, then $w = s_ns_{n-1} \ldots s_1$.

(C) \underline{Type $C_n$}: If $X(w)_{T}^{ss}(\mathcal L_{\chi} \neq \emptyset$ for some non-zero dominant weight and $w$ is a Coxeter element, then $w = s_ns_{n-1} \ldots s_1$.

(D) \underline{Type $D_n$}: (1) $D_4$:  $X(w)_{T}^{ss}(\mathcal L_{\chi} \neq \emptyset$ for some non-zero dominant weight and $w$ is a Coxeter element if and only if  $l(ws_2)=l(w)+1$.

(2) $D_n, n \geq 5$: If $X(w)_{T}^{ss}(\mathcal L_{\chi} \neq \emptyset$ for some non-zero dominant weight and $w$ is a Coxeter element, then $w = s_ns_{n-1} \ldots s_1$.
 
(E) \underline{ $E_6, E_7, E_8$}:  There is no Coxeter element $w$ for which there exist a 
non-zero dominant weight $\chi$ such that  $X(w)_{T}^{ss}(\mathcal L_{\chi} \neq \emptyset$.

(F) \underline{ $F_4$}:  There is no Coxeter element $w$ for which there exist 
a non-zero dominant weight $\chi$ such that  $X(w)_{T}^{ss}(\mathcal L_{\chi} \neq \emptyset$.

(G) \underline{ $G_2$}: There is no Coxeter element $w$ for which there exist 
a non-zero dominant weight $\chi$ such that  $X(w)_{T}^{ss}(\mathcal L_{\chi} \neq \emptyset$.

\end{theorem}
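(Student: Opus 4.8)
The plan is to combine the combinatorial criterion of Lemma 2.1 with the structural constraints of Lemma 4.1, proceeding case-by-case through the Dynkin diagrams exactly as listed in the statement. The basic observation is that by Lemma 2.1 (applied with the support of $\chi$ possibly a proper subset of $S$) the existence of a non-trivial $\mathcal L_\chi$ with $X(w)^{ss}_T(\mathcal L_\chi)\neq\emptyset$ for a Coxeter element $w$ is equivalent to the purely combinatorial statement that there is a non-zero dominant $\chi$ in the root lattice with $w\chi\leq 0$; moreover, since $w$ is a Coxeter element, its support of simple reflections is all of $S$, and irreducibility of $R$ forces every coefficient $a_\beta$ of $\chi$ to be a positive integer (as already noted in the proof of Lemma 4.1). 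So throughout I work with the condition ``$w$ Coxeter, $\chi=\sum a_\beta\beta$ with all $a_\beta\in\mathbb Z_{>0}$, $\chi$ dominant, $w\chi\leq 0$'' and translate each conclusion back via Lemma 2.1.

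\textbf{The inductive engine.} The key mechanism is: if $\ell(ws_\alpha)=\ell(w)-1$ then the coefficient of $\alpha$ in $w\chi$ equals the coefficient of $\alpha$ in $s_\alpha\chi$, which is $-(a_\alpha+\sum_{\beta\neq\alpha}\langle\beta,\check\alpha\rangle a_\beta)$, and $w\chi\leq 0$ forces this to be $\leq 0$. Lemma 4.1 then tells us $\alpha$ has exactly one or two neighbours in the Dynkin diagram (and the two-neighbour case only occurs in $A_3$ with the middle node, forcing $\chi=a(\beta+2\alpha+\gamma)$). Hence, outside the $A_3$ exception, any ``descent direction'' $\alpha$ of $w$ must be an \emph{end node} of the Dynkin diagram. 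For a tree with at most two leaves (i.e.\ type $A$ or, after deleting the branch node, the linear pieces of $B,C,D$), this is extremely restrictive: I would argue that a Coxeter element all of whose right descents are leaves, together with the coefficient inequalities $2a_\beta\geq a_\alpha$ whenever $\alpha,\beta$ adjacent and the descent inequality at the leaf, pins down $w$ by peeling off leaves one at a time. Concretely, in type $A_n$ ($n\geq 4$) I would show the unique leaf that can be a descent forces $w$ to be of the form $s_i\cdots s_1 s_{i+1}\cdots s_n$ (a ``chute'' word), the case $i=n$ giving $s_n\cdots s_1$; the $A_3$ case is exceptional precisely because the middle node becomes available, and one checks directly that every Coxeter element of $A_3$ admits such a $\chi$ (e.g.\ $\chi=\beta+2\alpha+\gamma$ works for the relevant elements, and the remaining ones by symmetry or direct computation). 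For $B_n,C_n$ ($n\geq 3$) and $D_n$ ($n\geq 5$), the branch/multiple-edge structure means only one end is a legal descent, and iterating forces $w=s_ns_{n-1}\cdots s_1$; here one also uses the bound $|\langle\varpi_r,\check\alpha\rangle|\leq 2$ (the unnamed lemma before Proposition 3.1) to rule out the short/long-root end of $B_n,C_n$ as a descent. The small cases $B_2$, $C_2\cong B_2$, $D_4$ are handled by hand: $B_2$ works for all Coxeter elements by a one-line computation; $D_4$ works exactly when $s_2$ (the central node) is \emph{not} a right descent, i.e.\ $\ell(ws_2)=\ell(w)+1$, which is the $A_3$-type exception realized at the trivalent vertex, and one verifies both directions by computing $w\chi$ for $\chi$ supported appropriately.

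\textbf{The exceptional types.} For $E_6,E_7,E_8,F_4,G_2$ the claim is that \emph{no} Coxeter element works. For $E_6,E_7,E_8$ the Dynkin diagram has a trivalent vertex but the three branches all have length $\geq 1$ with at least one branch of length $\geq 2$ (indeed all branches in $E_n$ are longer than in $D_4$ on at least one side), so the $A_3$-exception of Lemma 4.1(2) can never be met: any right descent $\alpha$ of $w$ would have to be a leaf, but then one shows by the peeling argument that the process gets stuck at the trivalent vertex — at that vertex all three neighbours are still present, violating the ``one or two neighbours'' conclusion, and one can never reduce past it while keeping $\chi$ in the root lattice with positive integer coefficients. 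Alternatively, and perhaps cleaner, I would compute the relevant inequalities at the two or three leaves simultaneously and derive a contradiction from the coefficient constraints $2a_\beta\geq a_\alpha$ propagated along the branches toward the trivalent node. For $F_4$ and $G_2$ the presence of a double (resp.\ triple) edge together with the coefficient bound $2a_\beta\geq a_\alpha$ and the strengthened inequality at the multiple edge (as in the $\langle\beta,\check\alpha\rangle=-2$ sub-case of Lemma 4.1, which was shown to be impossible) rules out every leaf as a descent, hence rules out every Coxeter element; $G_2$ is a direct two-element check.

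\textbf{Main obstacle.} The routine part is the arithmetic of the coefficient inequalities; the genuinely delicate part is the ``peeling'' combinatorial argument that converts ``every right descent of the Coxeter element $w$ is a leaf of the Dynkin diagram'' into the explicit normal form for $w$ (and, in the exceptional types, into a contradiction). One must be careful that removing a leaf $\alpha$ from the diagram corresponds correctly to passing from $w$ to $ws_\alpha$ and from $\chi$ to a dominant weight for the smaller root system — the subtlety being that $ws_\alpha$ need not be a Coxeter element of the sub-diagram in the naive sense, and that dominance/positivity of the coefficients must be re-established at each step. Handling the branch node in $D_n$ versus $D_4$ (where the exception kicks in) and in $E_n$ (where it does not) is exactly where the induction is most fragile, and I would devote the bulk of the write-up to making that step airtight, treating $D_4$, $D_5$, and $E_6$ as the base cases that separate the ``works for a special family'' regime from the ``never works'' regime.
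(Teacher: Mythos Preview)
Your high-level strategy---reduce via Lemma~2.1 to the condition $w\chi\le 0$, then invoke Lemma~4.1 to force every right descent of $w$ to be a leaf of the Dynkin diagram, then go case-by-case---is exactly the paper's. Where you diverge is in the mechanism for passing from ``every right descent is a leaf'' to the explicit form of $w$ (or to a contradiction). You propose an inductive \emph{peeling} argument: remove a leaf $\alpha$, replace $w$ by $ws_\alpha$, and replace $\chi$ by some dominant weight on the smaller root system. The paper does \emph{not} do this. Instead, for each type it fixes a candidate leaf descent $\alpha$, writes down the single inequality ``coefficient of $\alpha$ in $s_\alpha\chi$ is $\le 0$'' (your ``inductive engine''), and then chains this with the dominance inequalities $\langle\chi,\check\alpha_j\rangle\ge 0$ at neighbouring nodes to force some $a_k=0$, a contradiction. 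This is precisely the ``alternatively, and perhaps cleaner'' method you mention for the exceptional types; the paper uses it uniformly, including for $B_n$, $C_n$, $D_n$ (to rule out the ``wrong'' leaves) and for $E_n$, $F_4$, $G_2$ (to rule out all leaves). Once only one leaf survives as a possible descent, the paper pins down $w$ by a direct commutation argument in the Coxeter word, not by induction on rank.

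Two specific points. First, your appeal to the bound $|\langle\varpi_r,\check\alpha\rangle|\le 2$ from \S3 to rule out the short/long end in $B_n,C_n$ is misplaced: that bound concerns fundamental weights and plays no role here; the paper rules out $\alpha_n$ as a descent in $B_n$ (resp.\ $C_n$) by the two-line computation $2a_{n-1}\le a_n$ together with $\langle\chi,\check\alpha_{n-1}\rangle\ge 0$, giving $a_{n-2}\le 0$. Second, the gap you yourself identify in the peeling approach is real: after removing $\alpha$ there is no canonical dominant $\chi'$ on the smaller system with $(ws_\alpha)\chi'\le 0$, so the induction does not close without extra work. The paper's direct coefficient-chasing avoids this entirely and is both shorter and safer; I would recommend abandoning the peeling induction and writing out the inequality chains explicitly for each leaf in each type, as the paper does.
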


\begin{proof}

By lemma (2.1), $X(w)_{T}^{ss}(\mathcal L_{\chi} \neq \emptyset$ for a non-zero dominant weight $\chi$ if and only if $w\chi \leq 0$. So, using this lemma we investigate all the cases.

{\bf\underline{Proof of (A)}:}

%\begin{center}
%\includegraphics{An.eps}
%\end{center}

$(1)$ The Coxeter elements of $A_3$ are precisely $s_1s_2s_3, s_1s_3s_2, s_2s_1s_3, s_3s_2s_1$. 
For $w =s_1s_3s_2$, take $\chi = \alpha_1+2\alpha_2+\alpha_3$. Otherwise take $\chi = \alpha_1
+\alpha_2+\alpha_3$. Then $w \chi \leq 0$.

$(2)$ Let $n \geq 4$, and let $w \chi \leq 0$ for some dominant weight $\chi$. By lemma (4.1), 
if $l(ws_i) = l(w)-1$, then $i=1$ or $i= n$.

If  $l(ws_n) \neq  l(w)-1$, then using the fact that $s_i$ commute with $s_j$ for $j 
\neq i-1,i+1$, it is easy to see that $w = s_ns_{n-1}\ldots s_2s_1$. 

If  $l(ws_n) =  l(w)-1$, then, let $i$ be the least integer in $\{1,2,\cdots , n-1\}$ such that 
$w = \phi s_{i+1}\ldots s_n$, for some $\phi \in W$ with $l(w)= l(\phi)+(n-i)$. Then, we have 
to show that $\phi = s_is_{i-1}\ldots s_1$.

If $\phi = \phi_1s_j$ for some $j \in \{2,3, \cdots , i-1\}$, then $w$ is of the form 
\[
\begin{array}{lcl}
w &=& \phi_1s_j(s_{i+1}\ldots s_{n-1}s_n) \\
  &=& \phi_1(s_{i+1}\ldots s_{n-1}s_ns_j). 
\end{array}
\]
This contradicts lemma (4.1). So $j \in \{1,i\}$. Again $j =i$ is not possible 
unless $i=1$ by the minimality of $i$.

Thus, we have $\phi = s_i\ldots s_1$.

%If $i=1$, then $w=s_1s_2\ldots s_n$. If $j =1$ and $i \neq 1$, then since $s_j$ commutes with $s_k$ for $k \geq (i+1)$,  we can push $s_j$ to the right which will give a  contradiction to our assumption.  So the only possibility for $w$ is either  $s_ns_{n-1} \ldots s_1$ or $s_i \ldots s_1s_{i+1}\ldots s_n$ for some $1 \leq i \leq n-1$.

{\bf\underline{Proof of (B):}}

%\begin{center}
%\includegraphics{Bn.eps}
%\end{center}

(1) For $w = s_1s_2$,  take $\chi = \alpha_1+2 \alpha_2$. 

For $w = s_2s_1$, take $\chi = \alpha_1+\alpha_2$.

(2) For $ w = s_ns_{n-1}\ldots s_1$, take $\chi = \alpha_1+ \alpha_2+ \ldots \alpha_n.$ 
Then $w\chi = -\alpha_n \leq 0$.

Conversely, let $w$ be a Coxeter element and let $\chi$ be a non-zero dominant 
weight such that $w\chi \leq 0$. By lemma (4.1), if $l(ws_i)=l(w)-1$ then either $i=1$ or $i=n$.

If  $l(ws_n) \neq  l(w)-1$, then using the fact that $s_i$ commute with $s_j$ for $j \neq i-1,i+1$, 
it is easy to see that $w = s_ns_{n-1}\ldots s_2s_1$.

We now claim that $l(ws_n)=  l(w)+1$. If not, then, the coefficient of $\alpha_n$ in $w\chi =$ 
coefficient of $\alpha_n$ in $s_n\chi$. 

Now, the coefficient of $\alpha_n$ in $s_n\chi$ is $ 2a_{n-1}-a_n$. Since $w\chi \leq 0$, we have 
$2a_{n-1}-a_n \leq 0$.

$\hspace{4cm} \Rightarrow 2a_{n-1} \leq a_n$. $\hspace{3.8cm}\longrightarrow (1)$

Since $\chi$ is dominant, we have $\langle \chi, \check{\alpha_{n-1}} \rangle \geq 0$. Thus, 
we get

\hspace{4cm} $-a_{n-2}+2a_{n-1}-a_n \geq 0$.

\hspace{4cm} $\Rightarrow a_{n-2}\leq 2a_{n-1}-a_n \leq 0$, by (1).

So $a_{n-2}= 0$, a contradiction to the assumption that $n \geq 3$ and $\chi$ is a non-zero 
dominant weight. Thus $l(ws_n)=  l(w)+1$.

So the only possibility for $w$ is $s_ns_{n-1}\ldots s_1$.  

{\bf\underline{Proof of (C)}:}

%\begin{center}
%\includegraphics{Cn.eps}
%\end{center}

For $  w =s_ns_{n-1}\ldots s_1$, take $\chi = 2(\sum_{i \neq n}\alpha_i)+\alpha_n$. Then, $\chi$
 is dominant and $w\chi = -\alpha_n$.

Conversely, let $w$ be a Coxeter element and let $\chi$ be a non-zero dominant weight such that 
$w\chi \leq 0$. By lemma (4.1), if $l(ws_i)=l(w)-1$ then $i \in \{1,n\}$.

If  $l(ws_n) \neq  l(w)-1$, then using the fact $s_i$ commute with $s_j$ for $j \neq i-1,i+1$, 
it is easy to see that $w = s_ns_{n-1}\ldots s_2s_1$.

{\it Claim}: $l(ws_n)=  l(w)+1$. 

If not, then, the coefficient of $\alpha_n$ in $w\chi =$ coefficient 
of $\alpha_n$ in $s_n\chi$.

Now, the coefficient of $\alpha_n$ in $s_n\chi$ is $a_{n-1}-a_n$. Since $w\chi \leq 0$, we have 
$a_{n-1}-a_n \leq 0$.

Hence, we have  $a_{n-1} \leq a_n$. $\hspace{3.8cm}\longrightarrow (2)$

Since $\chi$ is dominant, we have $\langle \chi, \check{\alpha_{n-1}} \rangle \geq 0$. Thus,
 we get

\hspace{4cm} $-a_{n-2}+2a_{n-1}-2a_n \geq 0$.

\hspace{4cm} $\Rightarrow  a_{n-2}\leq 2a_{n-1}-2a_n \leq 0$, by (2).

So $a_{n-2}= 0$, a contradiction to the assumption that $\chi$ is a non-zero dominant weight. 

Thus $l(ws_n)=  l(w)+1$.

So the only possibility for $w$ is $s_ns_{n-1}\ldots s_1$.

{\bf\underline{Proof of (D)}:}

%\begin{center}
%\includegraphics{Dn.eps}
%\end{center}

(1) For $w = s_4s_3s_2s_1$, 
take $\chi = 2(\alpha_1+\alpha_2)+\alpha_3+\alpha_4$, for  $w = s_4s_1s_2s_3$, take $\chi = 2(\alpha_3+\alpha_2)+\alpha_1+\alpha_4$ and
for $w = s_3s_1s_2s_4$, take $\chi = 2(\alpha_4+\alpha_2)+\alpha_1+\alpha_3$. 

The converse follows from lemma (4.1).

%Let $l(ws_2)=l(w)-1$ and let $\chi = \sum_{i=1}^{4}a_i\alpha_i$, be a  dominant 
%weight such that $w\chi \leq 0$. Then $a_i \in \mathbb Z_{\geq 0}, \,\, \forall \,\, i =
%1,2,3,4$.

%Also, since $w$ is a Coxeter element, coefficient of $\alpha_2$ in $w\chi =$ 
%coefficient of $\alpha_2$ in $s_2\chi = a_1+a_3+a_4-a_2$.

%Since $w\chi \leq 0$, we must have $a_1+a_3+a_4 \leq a_2$. $\hspace{3cm}\longrightarrow (3)$
  
%On the other hand, since $\chi$ is dominant, $\langle \chi, \check{\alpha_1} \rangle = 2a_1-a_2\geq 0$.

%Similarly, $2a_3 \geq a_2, 2a_4 \geq a_2$.

%Hence, $2a_2 \geq 2(a_1+a_3+a_4) \geq 3a_2$. Hence, we have $a_2 = 0$. Thus, $\chi = 0$. 

(2) For $  w =s_ns_{n-1}\ldots s_1$, take $\chi = 2(\sum_{i=1}^{n-2}\alpha_i)+\alpha_{n-1}+\alpha_n$. Then $w\chi \leq 0$.

Conversely, let $w$ be a Coxeter element and let $\chi$ be a non-zero dominant 
weight such that $w\chi \leq 0$. By lemma (4.1), if $l(ws_i)=l(w)-1$ then $i \in \{1,n-1,n\}$.

Now, if  $l(ws_1) =  l(w)-1$, then, it is easy to see that $w = s_ns_{n-1}\ldots s_2s_1$.

So, it is sufficient to prove that $l(ws_n)=  l(w)+1$ and $l(ws_{n-1})=  l(w)+1$. 

If $l(ws_n)=  l(w)-1$, then, the 
coefficient of $\alpha_n$ in $w\chi =$ coefficient of $\alpha_n$ in $s_n\chi 
=  a_{n-2}-a_n$.

 Since $w\chi \leq 0$, we have $a_{n-2}-a_n \leq 0$.$\hspace{3cm}
\longrightarrow (4)$

Since $\chi$ is dominant we have $\langle \chi, \check{\alpha_{n-2}} \rangle \geq 0$. Therefore, we have 

\hspace{4cm} $2a_{n-2}\geq a_{n-1}+a_{n-3}+a_n $.$\hspace{2.8cm}\longrightarrow (5)$

Also, since  $\langle \chi, \check{\alpha_{n-1}} \rangle \geq 0$  and $\langle \chi, 
\check{\alpha_{n-3}} \rangle \geq 0$, we have

 \hspace{4cm} $2a_{n-1}- a_{n-2}\geq 0$  $\hspace{3.8cm}\longrightarrow (6)$

 and \hspace{3cm} $2a_{n-3}- a_{n-4}-a_{n-2}\geq 0 $. $\hspace{2.8cm}\longrightarrow (7)$

From (5), we get 

 \hspace{4cm} $4a_{n-2}\geq 2a_{n-1}+2a_{n-3}+2a_n $

 \hspace{5cm} $\geq a_{n-2}+(a_{n-4}+a_{n-2})+2a_n$, from (6) and (7)

  \hspace{5cm} $\geq 2a_{n-2}+2a_{n-2}+a_{n-4}$, by (4)

 \hspace{5cm} $= 4a_{n-2}+a_{n-4}$.

So $a_{n-4} = 0$, a contradiction to the assumption that $\chi$ is a non-zero dominant weight. 
So $l(ws_n)=  l(w)+1$.

Using a similar argument, we can show that $l(ws_{n-1})=  l(w)+1$. 

{\bf\underline{Proof of (E):}}

%\begin{center}
%$\includegraphics{E6.eps}
%$\end{center}

%\begin{center}
%\includegraphics{E7.eps}
%\end{center}

%\begin{center}
%\includegraphics{E8.eps}
%\end{center}

$\underline{\large\bf{Type \,\, E_8:}}$

Let $w$ be a Coxeter element  and  let $\chi$ be a non-zero dominant 
weight $\chi$ such that $w\chi \leq 0$. Further, if $l(ws_i)=l(w)-1$, then by 
lemma (4.1),  $i \in \{1,2,8\}$.

$\it{Case \,\,  1: i = 8}$

Co-efficient of $\alpha_8$ in $w\chi =$ Co-efficient of $\alpha_8$ in $s_8(\chi)=
a_7-a_8 \leq 0$.

Since $\chi$ is dominant, $\langle \chi, \check{\alpha_i} \rangle \geq 0\,\, \forall \,\, i \in 
\{1,2,3,4,5,6,7,8\}$. 
 
$\langle \chi, \check{\alpha_7} \rangle \geq 0 \Rightarrow 2a_7 \geq a_6+a_8 
\geq a_6+a_7$.

Hence, we have  $a_7 \geq a_6$.

$\langle \chi, \check{\alpha_6} \rangle \geq 0 \Rightarrow 2a_6 \geq a_5+a_7 
\geq a_5+a_6$

 \hspace{5cm} $\Rightarrow a_6 \geq a_5$.

$\langle \chi, \check{\alpha_5} \rangle \geq 0 \Rightarrow 2a_5 \geq a_4+a_6 
\geq a_4+a_5$.

\hspace{5cm} $\Rightarrow a_5 \geq a_4$

$\langle \chi, \check{\alpha_3} \rangle \geq 0 \Rightarrow 2a_3 \geq a_1+a_4$.

$\langle \chi, \check{\alpha_2} \rangle \geq 0 \Rightarrow 2a_2 \geq a_4$.

Now, $\langle \chi, \check{\alpha_4} \rangle \geq 0 \Rightarrow 2a_4 \geq a_2
+a_3+a_5$

\hspace{3cm} $\Rightarrow 4a_4 \geq 2a_2+2a_3+2a_5$.

\hspace{4cm} $\geq a_4+a_1+a_4+2a_4$, since $a_5 \geq a_4$.
  
So, $a_1=0$.
Thus in this case, there is no Coxeter element $w$ for which there is a non-zero 
dominant weight such that $w\chi \leq 0$.

$\it{Case \,\, 2: i = 1}$

Co-efficient of $\alpha_1$ in $w\chi =$ Co-efficient of $\alpha_1$ in $s_1\chi=
a_3-a_1 \leq 0$.

Since $\chi$ is dominant, we have $\langle \chi, \check{\alpha_3} \rangle \geq 0$. Therefore,
$ 2a_3 \geq a_1+a_4 \geq a_3+a_4$ 

Hence, we have  $ a_3 \geq a_4$ .

Since,  $\langle \chi, \check{\alpha_4} \rangle \geq 0$, we have $2a_4 \geq a_3+a_2+a_5$.

Since, $\langle \chi, \check{\alpha_2} \rangle \geq 0$ and $\langle \chi, \check{\alpha_5} 
\rangle \geq 0$ we have $2a_2 \geq a_4$ and $2a_5 \geq a_4+a_6$.

Then, $4a_4 \geq 2a_3+2a_2+2a_5 \geq 2a_4+a_4+a_4+a_6,$ from the above inequalities.

So, $a_6 = 0$. Hence we have $\chi = 0$.  Thus, in this case also, there no Coxeter element $w$ 
for which there exist a non-zero dominant weight $\chi$ such that $w\chi \leq 0$.

$\it{Case \,\, 3: i = 2}$

Co-efficient of $\alpha_2$ in $w\chi =$ Co-efficient of $\alpha_2$ in $s_2\chi=
a_4-a_2 \leq 0$.

Since $\chi$ is dominant, $\langle \chi, \check{\alpha_i} \rangle \geq 0\,\, \forall \,\, i \in 
\{1,2,3,4,5,6\}$.

$\langle \chi, \check{\alpha_5} \rangle \geq 0 \Rightarrow 2a_5 \geq a_4+a_6 $.

$\langle \chi, \check{\alpha_3} \rangle \geq 0 \Rightarrow 2a_3 \geq a_1+ a_4$.

$\langle \chi, \check{\alpha_4} \rangle \geq 0 \Rightarrow 2a_4 \geq a_3+a_2+a_5$.

 Hence, we have $ 4a_4 \geq 2a_3+2a_2+2a_5$.

\hspace{3.2cm} $ \geq (a_1+a_4)+2a_4+(a_4+a_6) = a_1+a_6+4a_4$.

$\Rightarrow  a_1+a_6 = 0$. So, $a_1=a_6 = 0$. 

Hence, we have  $\chi=0$. Thus, in this case also, there no Coxeter element $w$ 
for which there exist a non-zero dominant weight $\chi$ such that $w\chi \leq 0$.

$\underline{\large\bf{Type \,\, E_6, E_7:}}$

Proof is similar to the case of $E_8$.

{\bf\underline{Proof of F}:}

%\begin{center}
%\includegraphics{F4.eps}
%\end{center}

Let $w$ be a Coxeter element. Let $\chi$ be a non-zero dominant weight such that $w\chi 
\leq 0$. If $l(ws_i)=l(w)-1$, then $i \in \{1,4\}$, by lemma (4.1). 

$\it{Case \,\, 1:i = 1}$

Co-efficient of $\alpha_1$ in $w\chi =$ Co-efficient of $\alpha_1$ in $s_1\chi=
a_2-a_1 \leq 0$.

Since $\chi$ is dominant, we have $\langle \chi, \check{\alpha_3} \rangle \geq 
0$ and $\langle \chi, \check{\alpha_2} \rangle \geq 0$.

$\langle \chi, \check{\alpha_2} \rangle \geq 0 \Rightarrow 2a_2 \geq a_1+a_3 \geq a_2+a_3$, since $a_2 \leq a_1$.

Hence, we have  $a_2 \geq a_3$.

$\langle \chi, \check{\alpha_3} \rangle \geq 0 \Rightarrow 2a_3 \geq 2a_2+a_4 \geq 2a_3+a_4$.

So, we have $ a_4 = 0$. Hence, $\chi = 0$. Thus, in this case there no Coxeter element $w$ 
for which there exist a non-zero dominant weight $\chi$ such that $w\chi \leq 0$.

$\it{Case \,\, 2: i = 4}$

Co-efficient of $\alpha_4$ in $w\chi =$ Co-efficient of $\alpha_4$ in $s_4\chi=
a_3-a_4 \leq 0$.

Since $\chi$ is dominant, we have $\langle \chi, \check{\alpha_3} \rangle \geq 0$ and $\langle 
\chi, \check{\alpha_2} \rangle \geq 0$.

$\langle \chi, \check{\alpha_3} \rangle \geq 0 \Rightarrow 2a_3 \geq 2a_2+a_4 \geq 2a_2+a_3$, since $a_3 \leq a_4$.

Hence, we have  $a_3 \geq 2a_2$. 

$\langle \chi, \check{\alpha_2} \rangle \geq 0 \Rightarrow 2a_2 \geq a_1+a_3 \geq a_1+2a_2$.

So, we have  $ a_1=0$. Hence, $\chi = 0$. Thus, in this case also, there no Coxeter element $w$ 
for which there exist a non-zero dominant weight $\chi$ such that $w\chi \leq 0$.

{\bf\underline{Proof of G}:}

%\begin{center}
%\includegraphics{G2.eps}
%\end{center}

Let $w$ be a Coxeter element and  $\chi = a_1\alpha_1+a_2\alpha_2$, be a 
dominant weight such that $w\chi \leq 0$

$\it{Case \,\,1: l(ws_1)=l(w)-1}$.

Co-efficient of $\alpha_1$ in $w\chi =$ Co-efficient of $\alpha_1$ in $s_1\chi=
a_2-a_1 \leq 0$.

Since $\chi$ is dominant, we have $\langle \chi, \check{\alpha_2} \rangle \geq 0$.

\hspace{5cm} $\Rightarrow 2a_2 \geq 3a_1 \geq 3a_2$.

So, we have  $ a_2=0$. Hence, $\chi = 0$. Thus, in this case, there no Coxeter element $w$ 
for which there exist a non-zero dominant weight $\chi$ such that $w\chi \leq 0$.

$\it{Case \,\, 2: l(ws_2)=l(w)-1}$.

Co-efficient of $\alpha_2$ in $w\chi =$ Co-efficient of $\alpha_2$ in $s_2\chi=
3a_1-a_2 \leq 0$.

Since $\chi$ is dominant, we have $\langle \chi, \check{\alpha_1} \rangle \geq 0$.

\hspace{5cm} $\Rightarrow 2a_1 \geq a_2 \geq 3a_1$.

So, we have $ a_1=0$. Hence, $\chi = 0$. Thus, in this case also, there no Coxeter element $w$ 
for which there exist a non-zero dominant weight $\chi$ such that $w\chi \leq 0$.

\end{proof}

\end{document}